\newcommand{\A}{\mathcal{A}}
\newcommand{\B}{\mathcal{B}}
\newcommand{\h}{\mathcal{H}}
\newcommand{\rtt}{\rightthreetimes}
\def\o{\omega}
\def\te{\theta}
\def\N{\mathbb{N}}
\def\T{\mathbb{T}}
\def\Z{\mathbb{Z}}
\def\h{\mathcal H}
\def\K{\mathcal K}
\def\Z{\mathbb Z}
\def\pe{{\sf P}}
\def\e{{\sf e}}
\def\bu{\bullet}
\def\({\left(}
\def\[{\left[}
\def\){\right)}
\def\]{\right]}
\def\Si{\Sigma}
\def\G{{\sf G}}
\def\n{{\sf N}}
\def\wG{\widehat\G}
\def\p{\parallel}
\def\<{\langle}
\def\>{\rangle}
 \newtheorem{thm}{Theorem}[section]
 \newtheorem{cor}[thm]{Corollary}
 \newtheorem{prop}[thm]{Proposition}
 \theoremstyle{definition}
 \newtheorem{defn}[thm]{Definition}
 \theoremstyle{remark}
 \newtheorem{rem}[thm]{Remark}
 \newtheorem{ex}[thm]{Example}
 \numberwithin{equation}{section}
\numberwithin{equation}{section}
\begin{document}


\title{Symmetry and Spectral Invariance for Topologically\\ Graded $C^*$-Algebras and Partial Action Systems}

\author{D. Jaur\'e and M. M\u antoiu}

\author{D. Jaur\'e and M. M\u antoiu
\footnote{
\textbf{2020 Mathematics Subject Classification:} Primary 43A20, Secondary 47L65,47L30.
\newline
\textbf{Key Words:} partial group action, symmetric Banach algebra, inverse closed, Wiener-Hopf operator, ordered group, graph algebra. 
\newline
{D. J.  acknowledges financial support from "Beca de Doctorado Nacional Conicyt". M. M. has been supported by the Fondecyt Project 1200884. 
}}
}



\maketitle


\begin{abstract}
A discrete group $\G$ is called rigidly symmetric if the projective tensor product between the convolution algebra $\ell^1(\G)$ and any $C^*$-algebra $\A$ is symmetric. We show that in each topologically graded $C^*$-algebra over a rigidly symmetric group there is a $\ell^1$-type symmetric Banach $^*$-algebra, which is inverse closed in the $C^*$-algebra. This includes new general classes, as algebras admitting dual actions and partial crossed products. Results including convolution dominated kernels, inverse closedness with respect with ideals or weighted versions of the $\ell^1$-decay are included. Various concrete examples are presented.
\end{abstract}

\tableofcontents

\section{Introduction}\label{introduction}

Spectral invariance and symmetry of normed algebras have been important concepts, starting with the classical article \cite{Wie}. Some general references which are related with the point of view of this article include \cite{FL,FGL,GL1,GL2,LP,Pa2,Po}. We also refer to \cite{Gr} for a very readable general presentation, making connections with related mathematical topics.

\smallskip
The basic definition is the following (we restrict to unital algebras; this will be enough in examples):

\begin{defn} \label{symmetric}
\begin{enumerate}
\item[(i)]
The unital $^*$-subalgebra $\mathfrak B$ of the $C^*$-algebra $\mathfrak C$ is called {\it inverse closed} (or {\it spectrally invariant}) if for each $\Phi\in\mathfrak B$ which is invertible in $\mathfrak C$\,, one actually has $\Phi^{-1}\!\in\mathfrak B$\,. 
\item[(ii)]
A unital Banach $^*$-algebra $\mathfrak B$ is called {\it symmetric} if the spectrum of $b^*b$ is positive for every $b\in\mathfrak B$ (this happens if and only if the spectrum of any self-adjoint element is real.)
\end{enumerate}
\end{defn}

The two notions are related, but in this Introduction we only discuss symmetry.
A great interest lies in algebras connected somehow to a discrete group. The following notions will be relevant in the article; the first two are classical, while the third constitutes our main concern:

\begin{defn} \label{groupsymmetric}
\begin{enumerate}
\item[(i)]
The discrete group $\G$ is called {\it symmetric} if the convolution Banach $^*$-algebra $\ell^1(\G)$ is symmetric.
\item[(ii)]
The discrete group $\G$ is called {\it rigidly symmetric} if given any $C^*$-algebra $\A$\,, the projective tensor product $\ell^1(\G)\otimes\A$ is symmetric.
\item[(iii)]
The discrete group $\G$ is called {\it hypersymmetric} if given any topologically graded $C^*$-algebra $\mathfrak C$\,, the Banach $^*$-algebra $\ell^1(\mathfrak C)$ (to be defined below) is symmetric.
\end{enumerate}
\end{defn}

The class of topologically graded $C^*$-algebras, described for example in \cite{Ex2} and reviewed briefly in Subsection \ref{flocinor},  is much larger than the one appearing in (ii). Simplifying, the main result of the article is

\begin{thm}\label{principala}
If a discrete group is rigidly symmetric, then it is hypersymmetric.
\end{thm}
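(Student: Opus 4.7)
The plan is to realize $\ell^1(\mathfrak{C})$ as a closed unital $^*$-subalgebra of the Banach $^*$-algebra $\ell^1(\G)\otimes\mathfrak{C}$ (viewed with the \emph{trivial} $\G$-action, so that the projective tensor product coincides with $\ell^1(\G,\mathfrak{C})$ under convolution and pointwise involution), and then transfer symmetry from the ambient algebra, which is symmetric by hypothesis, to the subalgebra via a soft spectral-boundary argument.

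Using the topological grading of $\mathfrak{C}$, any $\Phi\in\ell^1(\mathfrak{C})$ is encoded by an absolutely summable family $(\Phi_g)_{g\in\G}$ with $\Phi_g\in\mathfrak{C}_g$. Consider the natural map
\[
\iota:\ell^1(\mathfrak{C})\longrightarrow\ell^1(\G)\otimes\mathfrak{C}\;\equiv\;\ell^1(\G,\mathfrak{C}),\qquad \iota(\Phi)(g):=\Phi_g.
\]
It is a linear isometry whose image is the closed subspace $\{f:f(g)\in\mathfrak{C}_g\;\forall g\}$. The key structural observation is that the grading relations $\mathfrak{C}_g\mathfrak{C}_h\subset\mathfrak{C}_{gh}$ and $\mathfrak{C}_g^*=\mathfrak{C}_{g^{-1}}$ force the Fell-bundle convolution and involution on $\ell^1(\mathfrak{C})$ to coincide, under $\iota$, with the trivial-action convolution $(f\ast k)(h)=\sum_{r\in\G}f(r)k(r^{-1}h)$ and involution $f^*(g)=f(g^{-1})^*$ on $\ell^1(\G,\mathfrak{C})$. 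Hence $\iota$ realizes $\ell^1(\mathfrak{C})$ as a closed unital $^*$-subalgebra of $\ell^1(\G)\otimes\mathfrak{C}$.

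By rigid symmetry of $\G$, applied with $\A:=\mathfrak{C}$, the ambient algebra $\ell^1(\G)\otimes\mathfrak{C}$ is symmetric. It then suffices to invoke the general fact that \emph{a closed unital $^*$-subalgebra $\mathfrak{B}$ of a symmetric Banach $^*$-algebra $\mathfrak{B}'$ is itself symmetric.} For any self-adjoint $b\in\mathfrak{B}$, symmetry of $\mathfrak{B}'$ gives $\sigma_{\mathfrak{B}'}(b)\subset\R$, and the classical spectral-boundary theorem yields $\partial\sigma_{\mathfrak{B}}(b)\subset\sigma_{\mathfrak{B}'}(b)\subset\R$. A compact subset of $\mathbb{C}$ whose topological boundary lies in $\R$ must itself be contained in $\R$ (otherwise a point of maximal imaginary part would be a non-real boundary point, a contradiction); hence $\sigma_{\mathfrak{B}}(b)\subset\R$, completing the proof.

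The main obstacle is essentially conceptual: recognizing that the Fell-bundle algebraic structure underlying a general topologically graded $C^*$-algebra can be \emph{absorbed} by the trivial-action $\mathfrak{C}$-valued convolution algebra $\ell^1(\G,\mathfrak{C})$ through a forgetful $^*$-embedding. Once this is in place, the rigid symmetry hypothesis handles the ambient algebra and the spectral-boundary step is routine; the only verifications requiring care are that the embedding preserves convolution and involution, and that the image is norm-closed, both of which follow directly from the axioms of a topological grading.
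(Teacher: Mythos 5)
Your proof is correct and follows essentially the same route as the paper: an isometric $^*$-embedding of $\ell^1(\mathfrak C)$ as a closed $^*$-subalgebra of a trivial-action convolution algebra $\ell^1(\G;\mathcal B)\cong\ell^1(\G)\otimes\mathcal B$, followed by rigid symmetry of $\G$ and the heredity of symmetry for closed $^*$-subalgebras (which you re-prove via the spectral-boundary argument, where the paper cites Palmer). The only difference is cosmetic: you take $\mathcal B=\mathfrak C$ and use the inclusions $\mathfrak C_g\hookrightarrow\mathfrak C$ directly, whereas the paper takes $\mathcal B=\mathbb B(\h)$ via a faithful representation $\pi$ and its restrictions $\pi_g$; both choices satisfy the same identities and yield the same conclusion.
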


\begin{rem}\label{lacantors}
Clearly, a rigidly symmetric group is symmetric. It is still not known if the two notions are really different. Anyhow, as a consequence of \cite[Thm.\,1]{Ku}, if $\G$ is supposed only symmetric, but the $C^*$-algebra $\A$ is type $I$, the projective tensor product $\ell^1(\G)\otimes\A$ is symmetric. Classes of rigidly symmetric discrete groups are (cf. \cite{LP}): (a) Abelian, (b) finite, (c) finite extensions of discrete nilpotent. This last class includes all the finitely generated groups with polynomial growth. A central extension of a rigidly symmetric group is rigidly symmetric, by \cite[Thm.\,7]{LP}. In \cite[Cor.\,2.16]{Ma} it is shown that the quotient of a discrete rigidly symmetric group by a normal subgroup is rigidly symmetric. 
\end{rem}

There are intermediary notions between rigid symmetry and hypersymmetry. It has already been proved \cite{FGL,BB,Ma} that if a discrete group is rigidly symmetric, it is also symmetric in the sense of global crossed products: For an action $\beta$ of $\G$ in a $C^*$-algebra $\A$\,, one canonically constructs a crossed product $C^*$-algebra $\A\!\rtimes_\beta\!\G$\,, which is the enveloping algebra of a Banach $^*$-algebra $\ell^1_\beta(\G;\A)$\,. If for every such global action $(\G,\beta,\A)$ the algebra $\ell^1_\beta(\G;\A)$ is symmetric, $\G$ may be called {\it symmetric in the sense of global crossed products}. For a trivial action $\beta$ one gets the projective tensor product $\ell^1(\G)\otimes\A\cong\ell^1(\G;\A)$ and plenty of interesting $C^*$-algebras proved to be global crossed products for some non-trivial action, so \cite{FGL} supplied a lot of new examples of symmetric algebras. 

\smallskip
Our initial aim was to extend this further to {\it partial actions} $\te$ of discrete groups $\G$ on $C^*$-algebras $\A$\,. The class of $C^*$-algebras that may be realized as partial crossed products $\A\!\rtt_\te\!\G$\,, but which are not accessible to global actions, is quite impressive. We soon understood that topologically graded algebras over the discrete group $\G$ provide an even larger setting, which is also simpler and more natural in a certain sense. So Theorem \ref{teoremix} and a consequence on covariant convolution dominated kernels are stated in this framework. 

\smallskip
However, since the subcase of partial crossed products  is very important, and many $C^*$-algebras are proved to be of such a type (a non-trivial task, very often), in Subsection \ref{cerculix} we made our results explicit for this situation. In the same Subsection \ref{cerculix}, we indicated the connection between topologically $\G$-graded $C^*$-algebras $\mathfrak C$ and "dual actions" of $\G$. The right non-commutative concept is that of a coaction of $\G$ on $\mathfrak C$\,, but for simplicity and having in mind the examples we want to treat by this tool, we only discussed the case of Abelian groups. In this case one works with a (global) action of the compact Abelian Pontryagin dual $\wG$ and the spaces of the grading are defined to be the spectral subspaces of this dual action. 

\begin{rem}\label{lantors}
In \cite[Cor.\,4.8]{SW} it is shown that a symmetric group is amenable. Since our results below require $\G$ to be rigidly symmetric, {\it we will assume $\G$ to be (at least) amenable}. This will simplify the grading theory. In particular, the (universal) partial crossed product $\A\rtt_\te\!\G$ defined in Subsection \ref{cerculix} coincides with the reduced one \cite[Sect.\,3]{MC}.
\end{rem}

In Subsection \ref{idealuri} we treat inverse closednes with respect to a suitable ideal. The most important application is to Fredholm properties of operators, the ideal being (isomorphic to) the ideal of all the compact operators in a Hilbert space. 

\smallskip
In Subsection \ref{decay} we explore others types of decay, using weighted $\ell^1$-algebras. It is possible to implement the result in all the examples, but we will not do it explicitly.

\smallskip
Very often one encounters $C^*$-algebras that are not obviously involving a grading or a partial action of a group. Showing that such a structure can be put into evidence is a valuable tool to study various properties, in particular the existence of inverse closed subalgebras. In our last Section, using various references to the literature, we present such examples and find out what the $\ell^1$-algebra is composed of in each case. Our treatment is not meant to be exhaustive, but it will be clear that many new interesting cases are covered. In addition, some examples belong to different classes or can be treated from several points of view; we stick to the one that seems to be simpler. When the group is Abelian, we mostly use dual actions. In other cases partial actions or a direct presentation of the topological grading is the tool.

\smallskip
Some conventions and notations: $C^*$-algebraic morphisms and representations are assumed to be involutive, the ideals are self-adjoint, bi-sided and closed. $\mathbb B(\h)$ will denote the $C^*$-algebra of linear bounded operators in a Hilbert space; $\mathbb K(\h)$ is the ideal of compact operators.

\section{The abstract theory}\label{astrakt}

\subsection{The main result}\label{flocinor}

All over the article, $\G$ will be a discrete group with unit $\e$\,. As explained in Remark \ref{lantors}, there is no loss if we assume it amenable. For constructions involving graded $C^*$-algebras we followed \cite{Ex3,FD2}\,; we decided not to mention Fell bundles explicitly (with one exception), but they can be perceived in the background.

\begin{defn}\label{claro}
We say that the unital $C^*$-algebra $\mathfrak C$ is {\it $\G$-graded} if that there is a family $\big\{\mathfrak C_g\,\vert\,g\in\G\big\}$ of closed subspaces of $\mathfrak C$ such that the algebraic direct sum $\,\bigoplus_{g\in\G}\mathfrak C_g$ is dense in $\mathfrak C$ and such that
\begin{enumerate}
\item[(i)] 
$\mathfrak C_g\mathfrak C_h\subset\mathfrak C_{gh}\,$ for every $g,h\in\G$\,, 
\item[(ii)] 
$\mathfrak C_g^*\subset\mathfrak C_{g^{-1}}\,$ for every $g\in\G$\,,
\end{enumerate}
For every $h\in\G$ we denote by $P_h:\bigoplus_{g\in\G}\!\mathfrak C_g\to\mathfrak C_h$ the canonical projection (in general it might not be continuous). If $P_\e$ is continuous, $\mathfrak C$ is {\it a topologically $\G$-graded $C^*$-algebra.} We are going to write
\begin{equation}\label{veverita}
\mathfrak C=\widetilde{\bigoplus}_{g\in\G}\mathfrak C_g\,.
\end{equation}
\end{defn}

For $\Phi,\Psi\in\bigoplus_g\mathfrak C_g$ and $g\in\G$ one verifies easily that
\begin{equation}\label{multiplix}
P_g(\Phi\Psi)=\sum_{hk=g}P_h(\Phi)P_k(\Psi)\,,
\end{equation}
\begin{equation}\label{multiplex}
P_g(\Phi)^*=P_{g^{-1}}(\Phi^*)\,.
\end{equation}
When the grading is topological, there are several facilities which will be crucial for our approach. Let us denote by $\widetilde P_\e:\mathfrak C\to\mathfrak C_\e$ the extension of $P_\e$ to a linear bounded map. By Tomiyama's Theorem (\cite[Thm.\ 1.5.10]{BO}, see also \cite[Thm.\,3.3]{Ex3}), it is a positive contractive conditional expectation. Then all the projections $P_g$ do extend to contractions $\widetilde P_g:\mathfrak C\to\mathfrak C_g$\,. We refer to the proof of Theorem \ref{teoremix},(ii) for another use of topological grading, in conjunction with amenability.

\begin{defn}\label{glaro}
Let $\mathfrak C$ be a topologically graded $C^*$-algebra. On $\,\bigoplus_{g}\!\mathfrak C_g$  we can introduce the new norm
\begin{equation*}\label{normix}
\p\!\Phi\!\p_{\ell^1(\mathfrak C)}\,:=\sum_{g\in\G}\p\! P_g(\Phi)\!\p.
\end{equation*}
The completion $\ell^1(\mathfrak C)$ of $\bigoplus_{g}\!\mathfrak C_g$ in this norm is called {\it the $\ell^1$-algebra of the graded $C^*$-algebra $\mathfrak C$}\,.
\end{defn} 

Since $\p\!\cdot\!\p\,\le\,\p\!\cdot\!\p_{\ell^1(\mathfrak C)}$ on $\bigoplus_{g}\!\mathfrak C_g$\,, one may interpret $\ell^1(\mathfrak C)$ as a subspace of $\mathfrak C$\,:
\begin{equation*}\label{interpretatix}
\ell^1(\mathfrak C)=\Big\{\Phi\in\mathfrak C\;\Big\vert\,\sum_{g\in\G}\p\! \widetilde P_g(\Phi)\!\p\,<\infty\Big\}\,.
\end{equation*} 
In fact it is a Banach $^*$-subalgebra with the algebraic structure borrowed from $\mathfrak C$ and its new norm. Its elements may be written as unconditionally convergent series $\Phi=\sum_{g\in\G}\widetilde P_g(\Phi)$ in the norm topology.

\smallskip
The next result, besides being interesting in itself, is basic for our approach.

\begin{prop} \label{isometrix}
Let $\mathfrak C=\widetilde{\bigoplus}_{g\in\G}\mathfrak C_g$ be a topologically  graded $C^*$-algebra over a discrete group $\G$\,. There exists a $C^*$-algebra $\B$ and an isometric $^*$-morphism
\begin{equation*}\label{guyana}
T:\ell^1(\mathfrak C)\hookrightarrow \ell^1(\G;\mathcal B)\equiv\ell^1(\G)\otimes \B\,.
\end{equation*}
\end{prop}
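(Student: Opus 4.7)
The plan is to take $\B := \mathfrak C$ itself and define $T$ as the slicing map sending each element to its family of graded components: $T(\Phi)(g) := \widetilde P_g(\Phi)$ for $\Phi \in \ell^1(\mathfrak C) \subset \mathfrak C$ and $g \in \G$. The topological grading hypothesis is exactly what guarantees that each $\widetilde P_g$ is defined as a contraction on all of $\mathfrak C$: Tomiyama supplies the extension $\widetilde P_\e$ and the standard graded-bundle argument (already invoked in the paragraph before the statement) yields $\widetilde P_g$ for arbitrary $g$. Consequently $T$ is meaningful on the whole of $\ell^1(\mathfrak C)$, and not merely on the algebraic core $\bigoplus_g\mathfrak C_g$. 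The target $\ell^1(\G;\mathfrak C)\equiv\ell^1(\G)\otimes\mathfrak C$ carries its standard Banach $^*$-algebra structure, namely $\mathfrak C$-valued convolution $(f\ast h)(g)=\sum_{k\in\G}f(k)h(k^{-1}g)$ and pointwise involution $f^*(g)=f(g^{-1})^*$.

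That $T$ is an isometry is immediate from the very definition of the $\ell^1$-norm:
\[
\|T(\Phi)\|_{\ell^1(\G;\mathfrak C)}=\sum_{g\in\G}\|\widetilde P_g(\Phi)\|_{\mathfrak C}=\|\Phi\|_{\ell^1(\mathfrak C)}.
\]
For multiplicativity I would represent $\Phi,\Psi\in\ell^1(\mathfrak C)$ by their unconditionally convergent expansions $\Phi=\sum_h\widetilde P_h(\Phi)$ and $\Psi=\sum_k\widetilde P_k(\Psi)$ in $\mathfrak C$. Absolute convergence, together with the inclusion $\widetilde P_h(\Phi)\widetilde P_k(\Psi)\in\mathfrak C_{hk}$ and the continuity of $\widetilde P_g$, upgrades \eqref{multiplix} to
\[
\widetilde P_g(\Phi\Psi)=\sum_{hk=g}\widetilde P_h(\Phi)\,\widetilde P_k(\Psi),
\]
which is exactly the value of $(T(\Phi)\ast T(\Psi))(g)$. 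The identity $T(\Phi^*)=T(\Phi)^*$ falls out of \eqref{multiplex} after replacing $g$ by $g^{-1}$.

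The only genuine subtlety is this extension step: the grading identities \eqref{multiplix}--\eqref{multiplex}, literally valid only on the algebraic direct sum, must be transported to the whole completion $\ell^1(\mathfrak C)$. This is precisely the point where topological grading is indispensable: without the continuity of $P_\e$ the slicing maps $\widetilde P_g$ would not exist on $\mathfrak C$ and the morphism $T$ could not even be defined. Once the extensions are in hand the rest of the proof is bookkeeping with \eqref{multiplix} and \eqref{multiplex}, and the isometry property is built into the normalization of $\|\cdot\|_{\ell^1(\mathfrak C)}$.
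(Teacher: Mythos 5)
Your proof is correct and is essentially the paper's argument: the paper defines $[T(\Phi)](g)=\pi_g\big[\widetilde P_g(\Phi)\big]$ for a faithful representation $\pi$ of $\mathfrak C$, which is exactly your slicing map followed componentwise by the isometric $^*$-morphism $\pi$, and the verifications rest on \eqref{multiplix} and \eqref{multiplex} in both cases. Your observation that one may simply take $\B=\mathfrak C$ and omit $\pi$ altogether is a harmless simplification, since (unlike the partial-crossed-product setting of Remark \ref{apucalaie}, where a covariant representation is genuinely needed to untwist the product) the product on $\bigoplus_{g}\mathfrak C_g$ is already the ambient product of $\mathfrak C$, so the slicing map intertwines it with plain $\mathfrak C$-valued convolution.
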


\begin{proof}
Let $\pi:\mathfrak C\to\mathbb B(\h)$ be a faithful representation. For any $g\in\G$ we denote by $\pi_g$ the restriction of $\pi$ to the closed subspace $\mathfrak C_g$\,; it is a linear isometry. One has
\begin{equation}\label{primika}
\pi_g(\Phi)\pi_h(\Psi)=\pi_{gh}(\Phi\Psi)\,,\quad\forall\,g,h\in\G\,,\ \Phi\in\mathfrak C_g\,,\ \Psi\in\mathfrak C_h\,,
\end{equation}
\begin{equation}\label{secundika}
\pi_g(\Phi^*)=\pi_{g^{-1}}(\Phi)^*,\quad\forall\,g\in\G\,,\ \Phi\in\mathfrak C_g\,.
\end{equation}

Then we set
\begin{equation}\label{tronc}
\big[T(\Phi)\big](g):=\pi_g\big[\widetilde P_g(\Phi)\big]\,,\quad\Phi\in\ell^1(\mathfrak C)\,,\ g\in\G\,.
\end{equation}
Clearly $T$ is a well-defined linear isometry from $\ell^1(\mathfrak C)$ to $\ell^1\big(\G,\mathbb B(\h)\big)$\,:
$$
\p\!T(\Phi)\!\p_{\ell^1(\G,\mathbb B(\h))}\,=\sum_{g\in\G}\p\!\pi_g\big[\widetilde P_g(\Phi)\big]\!\p_{\mathbb B(\h)}\,=\sum_{g\in\G}\p\!\widetilde P_g(\Phi)\!\p\,=\,\p\!\Phi\!\p_{\ell^1(\mathfrak C)}.
$$
To check that $T$ respects the algebraic operations, it is enough to work with elements of the algebraic direct sum. By taking $\Phi,\Psi\in\bigoplus_g\!\mathfrak C_g$\,, one can write 
$$
\begin{aligned}
\big[(T\Phi)\star(T\Psi)\big](g)&=\sum_{h\in \G}(T\Phi)(h)(T\Psi)(h^{-1}g) \\
&=\sum_{h\in \G}\pi_h\left[P_h(\Phi)\right]\,\pi_{h^{-1}g}\big[P_{h^{-1}g}(\Psi)\big] \\
&\overset{\eqref{primika}}{=}\sum_{h\in \G}\pi_g\big[P_h(\Phi)P_{h^{-1}g}(\Psi)\big]\\
&=\pi_g\Big(\sum_{h\in \G}P_h(\Phi)P_{h^{-1}g}(\Psi)\Big)\\
&\overset{\eqref{multiplix}}{=}\pi_g\big[P_g(\Phi\Psi)\big]=\big[T(\Phi\Psi)\big](g)\,, 
\end{aligned}
$$
showing that $T$ is an algebraic morphism. Finally we treat the involution:
$$
\begin{aligned}
\left[T(\Phi)\right]^\star\!(g) & =\pi_{g^{-1}}\!\left[P_{g^{-1}}(\Phi)\right]^*\overset{\eqref{secundika}}{=} \pi_g\big[P_{g^{-1}}(\Phi)^*\big]\\
& \overset{\eqref{multiplix}}{=} \pi_g[P_g(\Phi^*)]= \big[T(\Phi^*)\big](g)\,.
\end{aligned}
$$
\end{proof}

With Proposition \ref{isometrix} in hand, we prove now our main abstract result.  Recall that a Banach $^*$-algebra is called {\it reduced} if its universal $C^*$-seminorm is in fact a norm. In this case, its enveloping $C^*$-algebra is simply its completion in this $C^*$-norm (no quotient is needed). 

\begin{thm}\label{teoremix}
Let $\mathfrak C=\widetilde{\bigoplus}_{g\in\G}\mathfrak C_g$ be topologically graded over a  rigidly symmetric discrete group $\G$\,.
\begin{enumerate}
\item[(i)] 
$\ell^1(\mathfrak C)$ is a symmetric Banach $^*$-algebra.
\item[(ii)] 
$\ell^1(\mathfrak C)$ is an inverse closed subalgebra of $\,\mathfrak C$\,.
\item[(iii)] 
Let $\Pi:\mathfrak C\to \mathbb{B}(\h)$ be a faithful representation. Then $\Pi\left[\ell^1(\mathfrak C)\right]$ is inverse closed in $\mathbb{B}(\h)$\,.
\end{enumerate}
\end{thm}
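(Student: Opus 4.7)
My strategy is to prove the three assertions in the order (i), (ii), (iii), treating Proposition \ref{isometrix} as the main workhorse.

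For (i), the plan is to transport symmetry across the isometric $^*$-embedding $T: \ell^1(\mathfrak C) \hookrightarrow \ell^1(\G;\B) \cong \ell^1(\G) \otimes \B$ provided by Proposition \ref{isometrix}. The codomain is symmetric by the rigid symmetry hypothesis on $\G$. Since $T$ identifies $\ell^1(\mathfrak C)$ with a closed $^*$-subalgebra of a symmetric Banach $^*$-algebra, symmetry of $\ell^1(\mathfrak C)$ follows from the classical inheritance principle for closed $^*$-subalgebras, which can be traced to the Pt\'ak inequality $r(a)^2 \leq r(a^*a)$ and its equality characterization of symmetry.

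For (ii), I would combine symmetry with the $C^*$-algebra structure of $\mathfrak C$. Under the standing amenability hypothesis on $\G$ (Remark \ref{lantors}), the universal $C^*$-seminorm on $\ell^1(\mathfrak C)$ should coincide with the norm inherited from $\mathfrak C$, so $\ell^1(\mathfrak C)$ is a \emph{reduced} symmetric Banach $^*$-algebra whose enveloping $C^*$-algebra is $\mathfrak C$ itself. The Hulanicki-Pt\'ak-Raikov machinery then yields, for every self-adjoint $\Psi \in \ell^1(\mathfrak C)$, the spectral identity $\sigma_{\ell^1(\mathfrak C)}(\Psi) = \sigma_{\mathfrak C}(\Psi) \subseteq \R$. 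Given $\Phi \in \ell^1(\mathfrak C)$ invertible in $\mathfrak C$, I reduce to the self-adjoint case via $\Phi^{-1} = (\Phi^*\Phi)^{-1} \Phi^*$, apply the spectral identity to $\Psi := \Phi^*\Phi$, and read off $(\Phi^*\Phi)^{-1} \in \ell^1(\mathfrak C)$.

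For (iii), the step from (ii) is immediate. Since $\Pi(\mathfrak C)$ is a unital $C^*$-subalgebra of $\mathbb B(\h)$ sharing the same unit, standard spectral permanence for unital $C^*$-subalgebras promotes invertibility of $\Pi(\Phi)$ in $\mathbb B(\h)$ to invertibility in $\Pi(\mathfrak C)$. Transferring through the $^*$-isomorphism $\mathfrak C \cong \Pi(\mathfrak C)$ and invoking (ii) places the inverse inside $\Pi\left[\ell^1(\mathfrak C)\right]$.

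The main obstacle I anticipate is the tacit reducedness claim feeding into (ii): verifying that the universal $C^*$-seminorm of $\ell^1(\mathfrak C)$ actually agrees with the $\mathfrak C$-norm. This is precisely where amenability of $\G$ must enter nontrivially. I expect an averaging argument analogous to the standard proof that for amenable group actions the reduced and full $C^*$-completions coincide, transported to the graded setting via the conditional expectation $\widetilde P_\e$ furnished by Tomiyama's theorem.
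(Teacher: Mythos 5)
Your proposal follows essentially the same route as the paper: part (i) via the embedding of Proposition \ref{isometrix} into the symmetric algebra $\ell^1(\G;\mathcal B)$ and the inheritance of symmetry by closed $^*$-subalgebras, part (ii) via reducedness plus the identification of $\mathfrak C$ with the enveloping $C^*$-algebra of $\ell^1(\mathfrak C)$ (which is exactly where the paper, like you, invokes amenability, via Exel's coincidence of full and reduced cross-sectional algebras of the Fell bundle), and part (iii) via spectral permanence for unital $C^*$-subalgebras. The extra detail you supply in (ii) merely unfolds the standard criterion the paper cites from Palmer, so the two arguments coincide in substance.
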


\begin{proof}
(i) By \cite[Th.11.4.2]{Pa2}, a closed $^*$-algebra of a symmetric Banach $^*$-algebra is also symmetric. Proposition \ref{isometrix} and the fact that $\ell^1(\G;\mathcal B)$  was assumed  symmetric proves the result.

\smallskip
(ii) It is known \cite[11.4]{Pa2} that a reduced Banach $^*$-algebra is symmetric if and only if it is inverse closed in its enveloping $C^*$-algebra. If we show that we are in such a case, we then apply (i). Our $\ell^1(\mathfrak C)$ is clearly reduced, being given as a $^*$-subalgebra of $\mathfrak C$\,. Actually, we claim that $\mathfrak C$ can be seen as (a copy of) the enveloping $C^*$-algebra of $\ell^1(\mathfrak C)$\,. In \cite[Corol.\,4.8]{SW} it is shown that a symmetric group (as our rigidly symmetric $\G$) is surely amenable. For such groups, one may identify $\mathfrak C$ with both the full and the reduced $C^*$-algebras of the Fell bundle associated to the topological grading \cite[Prop.\,4.2\,,Thm.\,4.7]{Ex3} and then the assertion about enveloping follows \cite[VIII.17]{FD2}.

\smallskip
(iii) Is a consequence of (ii), since any $C^*$-algebra is inverse closed in a larger $C^*$-algebra.
\end{proof}

In the context of group actions on $C^*$-algebras one deals with convolution dominated kernels and operators \cite{FL,FGL,Gr,GL2,Ma}. We briefly indicate an extension for the topologically graded case, leading to another form of a symmetric Banach $^*$-algebra at the level of kernels. Representations by integral operators and their inverse closedness properties are left to the reader.

\begin{defn}\label{kimpembe}
{\it A convolution dominated kernel} (or {\it matrix}) is a function $K:\G\times\G\to \mathfrak C$ such that $K(g,h)\in\mathfrak C_{gh^{-1}}$ for every $g,h\in\G$ and such that the norm
\begin{equation*}\label{cambio}
\p\!K\!\p_\mathscr K\,:=\inf\big\{\!\p\!\kappa\!\p_{\ell^1(\G)}\,\big\vert\,\p\!K(g,h)\!\p_\mathfrak C\,\le|\kappa(gh^{-1})|\,,\,\forall\,g,h\in\G\big\}
\end{equation*}
is finite. We denote by $\mathscr K_\G(\mathfrak C)$ the vector space of all these convolution dominated kernels. An element $K$ of $\mathscr K_\G(\mathfrak C)$ is called {\it covariant}, and we write $K\in\mathscr K^{\rm co}_\G(\mathfrak C)$\,, if
\begin{equation*}\label{pusimic}
K(gk,hk)=K(g,h)\,,\quad\forall\,g,h,k\in\G\,.
\end{equation*}
\end{defn}

It is straightforward to show that endowed with the norm $\p\!\cdot\!\p_\mathscr K$\,, the multiplication
\begin{equation*}\label{greta}
(K\bu L)(g,h):=\sum_{k\in\G}K(g,k)L(k,h)
\end{equation*}
and the involution $K^\bu(g,h):=K(h,g)^*$, the space $\mathscr K_G(\mathfrak C)$ is a Banch $^*$-algebra. 

\begin{cor}\label{climatic}
The subspace $\mathscr K^{\rm co}_\G(\mathfrak C)$ is a symmetric Banach $^*$-algebra.
\end{cor}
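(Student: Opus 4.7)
The plan is to identify $\mathscr K^{\rm co}_\G(\mathfrak C)$ isometrically with $\ell^1(\mathfrak C)$ as a Banach $^*$-algebra, and then invoke Theorem \ref{teoremix}(i).

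First I would define the map $\Theta:\ell^1(\mathfrak C)\to\mathscr K^{\rm co}_\G(\mathfrak C)$ by
$$
\Theta(\Phi)(g,h):=\widetilde P_{gh^{-1}}(\Phi)\,,\qquad \Phi\in\ell^1(\mathfrak C),\ g,h\in\G\,.
$$
Since $\widetilde P_{gh^{-1}}(\Phi)\in\mathfrak C_{gh^{-1}}$ and $\Theta(\Phi)(gk,hk)=\widetilde P_{(gk)(hk)^{-1}}(\Phi)=\widetilde P_{gh^{-1}}(\Phi)$, this does land in $\mathscr K^{\rm co}_\G(\mathfrak C)$. Conversely, any covariant kernel $K$ is determined by its restriction to the first variable via $K(g,h)=K(gh^{-1},\e)$, and one recovers $\Phi:=\sum_{g\in\G} K(g,\e)$ as an element of $\ell^1(\mathfrak C)$ once $K$ is dominated. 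Hence $\Theta$ is a bijection.

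Next, I would verify the norm is preserved. The natural choice $\kappa_\Phi(g):=\p\widetilde P_g(\Phi)\p$ dominates $\Theta(\Phi)$ pointwise and gives $\p\kappa_\Phi\p_{\ell^1(\G)}=\p\Phi\p_{\ell^1(\mathfrak C)}$. For any competitor $\kappa$ with $\p\Theta(\Phi)(g,h)\p\le|\kappa(gh^{-1})|$, specializing to $h=\e$ yields $|\kappa(g)|\ge\p\widetilde P_g(\Phi)\p$, so $\p\kappa\p_{\ell^1(\G)}\ge\p\Phi\p_{\ell^1(\mathfrak C)}$. Thus $\p\Theta(\Phi)\p_\mathscr K=\p\Phi\p_{\ell^1(\mathfrak C)}$ and $\Theta$ is an isometry.

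Then I would check compatibility with the $^*$-algebra operations. The involution is immediate from \eqref{multiplex}:
$$
\Theta(\Phi)^\bu(g,h)=\Theta(\Phi)(h,g)^*=\widetilde P_{hg^{-1}}(\Phi)^*=\widetilde P_{gh^{-1}}(\Phi^*)=\Theta(\Phi^*)(g,h).
$$
For the product, by the substitution $u=gk^{-1},\ v=kh^{-1}$ (so $uv=gh^{-1}$) I obtain
$$
\big[\Theta(\Phi)\bu\Theta(\Psi)\big](g,h)=\sum_{k\in\G}\widetilde P_{gk^{-1}}(\Phi)\widetilde P_{kh^{-1}}(\Psi)=\sum_{uv=gh^{-1}}\widetilde P_u(\Phi)\widetilde P_v(\Psi),
$$
which, by formula \eqref{multiplix} extended by continuity from $\bigoplus_g \mathfrak C_g$ to $\ell^1(\mathfrak C)$, equals $\widetilde P_{gh^{-1}}(\Phi\Psi)=\Theta(\Phi\Psi)(g,h)$. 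The main technical point is precisely this continuous extension: one needs that for $\Phi,\Psi\in\ell^1(\mathfrak C)$, the double series $\sum_{u,v}\p\widetilde P_u(\Phi)\p\p\widetilde P_v(\Psi)\p$ is finite, which is immediate from the definition of the $\ell^1(\mathfrak C)$-norm, so Fubini applies and the formula passes to the completion.

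Since $\Theta$ is an isometric $^*$-algebra isomorphism and $\ell^1(\mathfrak C)$ is symmetric by Theorem \ref{teoremix}(i), the algebra $\mathscr K^{\rm co}_\G(\mathfrak C)$ is a closed symmetric Banach $^*$-subalgebra of $\mathscr K_\G(\mathfrak C)$, proving the corollary.
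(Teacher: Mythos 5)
Your proof is correct and follows essentially the same route as the paper: you construct the same isometric $^*$-isomorphism $\Phi\mapsto\big[(g,h)\mapsto\widetilde P_{gh^{-1}}(\Phi)\big]$ between $\ell^1(\mathfrak C)$ and $\mathscr K^{\rm co}_\G(\mathfrak C)$ (the paper calls it $\Upsilon$, with the same inverse $K\mapsto K(\cdot,\e)$) and then invoke Theorem \ref{teoremix}(i). You merely spell out the isometry and the multiplicativity computation that the paper leaves to the reader.
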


\begin{proof}
It is easy to check that $\mathscr K^{\rm co}_\G(\mathfrak C)$ is a closed $^*$-subalgebra of $\mathscr K_\G(\mathfrak C)$\,.
Let us define 
\begin{equation*}\label{london}
\big[\Upsilon(\Phi)\big](g,h):=\widetilde P_{gh^{-1}}(\Phi)\,,\quad\forall\,g,h\in\G\,.
\end{equation*}
Then $\Upsilon:\ell^1(\mathfrak C)\to\mathscr K_\G(\mathfrak C)$ is an isometric $^*$-morphism with range $\mathscr K^{\rm co}_\G(\mathfrak C)$\,. For the algebraic relations use \eqref{multiplix} and \eqref{multiplex} (extended). On this range, the inverse reads
\begin{equation*}\label{reads}
\big[\Upsilon^{-1}(K)\big]_g:=K(g,\e)\,,\quad\forall\,g\in\G\,.
\end{equation*}
Since $\mathscr K^{\rm co}_\G(\mathfrak C)$ and $\ell^1(\mathfrak C)$ are isomorphic, the result follows from Theorem \ref{teoremix}\;(i).
\end{proof}

\begin{rem}\label{fratrix}
In \cite{Ra} it is shown how to deform Fell bundles by a {\it $2$-cocycle} $\o:\G\!\times\!\G\to\T$\,, bound to satisfy for every $g,h,k\in\G$ the axioms
\begin{equation*}\label{bundix}
\o(g,h)\o(gh,k)=\o(h,k)\o(g,hk)\,,\quad\o(g,\e)=1=\o(\e,g)\,.
\end{equation*}
Starting with a topologically graded $C^*$-algebra $\mathfrak C=\widetilde{\bigoplus}_{g\in\G}\mathfrak C_g$\,, it is possible to construct a new one $\mathfrak C\{\o\}=\widetilde{\bigoplus}^\o_{g\in\G}\mathfrak C_g$\,. {\it The Banach $^*$-algebra $\ell^1\big(\mathfrak C\{\o\}\big)$ is different from $\ell^1(\mathfrak C)$ and it is symmetric and inverse closed in $\mathfrak C\{\o\}$\,.}
\end{rem}

\subsection{Topologically graded algebras from dual actions and partial crossed products}\label{cerculix}

Given a $C^*$-algebra, it is often not obvious whether it has an interesting grading or not. In this subsection we are going to review some ways to solve this problem, following especially results of Exel, Quigg and Raeburn. This will lead to two reformulations of Theorem \ref{theoremix} that are useful when studying examples and which will also provide the reader with a more extended picture. We will take advantage of the fact that our group $\G$ is discrete and amenable; this will allow for some simplifications.

\smallskip
The most general statement is that in this setting, {\it topologically graded $C^*$-algebras over $\G$ are equivalent to coactions of $\,\G$}\,, cf. \cite{Qu} and \cite[App.\,A]{Ra}. In this direction we only indicate the case of Abelian groups, that will be enough for the examples we intend to treat via this approach.

\smallskip
So we fix for a while a discrete Abelian group $\G$\,. Its Pontryagin dual $\widehat\G$ is a compact Abelian group with normalized Haar measure $d\chi$\,. Let $\big(\mathfrak C,\alpha,\wG\,\big)$ be a (usual, full) action of $\wG$ on a $C^*$-algebra. For every $g\in\G$ let
\begin{equation*}\label{spectrix}
\mathfrak C_g:=\big\{\Phi\in\mathfrak C\,\big\vert\,\alpha_\chi(\Phi)=\chi(g)\Phi\,,\,\forall\,\chi\in\wG\big\}
\end{equation*}
be the $g$'th spectral subspace of the action. It is easy to see that one gets a grading, as in \eqref{veverita}.
We have now the concrete projections
\begin{equation*}\label{projetrix}
\widetilde P_g:\mathfrak C\to\mathfrak C_g\,,\quad \widetilde P_g(\Phi):=\int_{\wG}\,\overline{\chi(g)}\,\alpha_\chi(\Phi)\,d\chi\,,
\end{equation*}
which are obviously contractive; the grading is topological. 
We can reformulate Theorem \ref{teoremix} in this Abelian setting; the $\ell^1$-algebra will deserve a special notation, to recall the action $\alpha$ and the type of decay.

\begin{thm}\label{exelix}
Let $\big(\mathfrak C,\alpha,\wG\,\big)$ be a continuous action of a compact Abelian group $\wG$ on a unital $C^*$-algebra. Denoting by $\G$ the dual of $\,\wG$\,,  
\begin{equation*}\label{peru}
\ell^1(\mathfrak C)\equiv\bigoplus_{g\in\G}^{1,\alpha}\mathfrak C_g=\Big\{\Phi\in\mathfrak C\;\Big\vert\,\sum_{g\in\G}\p\! \widetilde P_g(\Phi)\!\p\,<\infty\Big\}
\end{equation*}
is a reduced and symmetric Banach $^*$-algebra, which is inverse closed in $\mathfrak C$\,.
\end{thm}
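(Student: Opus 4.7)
My plan is to reduce the statement to Theorem \ref{teoremix} by verifying that the spectral subspace decomposition provides a topological $\G$-grading of $\mathfrak C$, and then invoking the fact that a discrete Abelian group is rigidly symmetric.

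First I would check the grading axioms. For $\Phi \in \mathfrak C_g$ and $\Psi \in \mathfrak C_h$, the multiplicativity of $\alpha_\chi$ and the fact that $\chi$ is a character give $\alpha_\chi(\Phi\Psi) = \chi(g)\chi(h)\Phi\Psi = \chi(gh)\Phi\Psi$, so $\mathfrak C_g \mathfrak C_h \subset \mathfrak C_{gh}$. The involutive identity $\alpha_\chi(\Phi^*) = \alpha_\chi(\Phi)^* = \overline{\chi(g)}\Phi^* = \chi(g^{-1})\Phi^*$ yields $\mathfrak C_g^* \subset \mathfrak C_{g^{-1}}$. Continuity of $\widetilde P_\e$ (indeed of every $\widetilde P_g$) is immediate from the integral formula, since $\|\widetilde P_g(\Phi)\| \le \int_{\wG} \|\alpha_\chi(\Phi)\|\, d\chi = \|\Phi\|$.

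The one nontrivial point is density of the algebraic direct sum $\bigoplus_{g\in\G} \mathfrak C_g$ in $\mathfrak C$. For this I would use the standard Fourier argument for continuous actions of compact Abelian groups: fix $\Phi\in\mathfrak C$, note that $\chi \mapsto \alpha_\chi(\Phi)$ is norm-continuous on $\wG$, and by Stone–Weierstrass the algebra generated by $\G = \widehat{\wG}$ is uniformly dense in $C(\wG)$. A convolution against a suitable trigonometric polynomial $p = \sum_{g\in F} c_g\,\overline{\chi(g)}$ (or a Fejér-type kernel on $\wG$) produces an element $\int_{\wG} p(\chi) \alpha_\chi(\Phi)\,d\chi = \sum_{g\in F} c_g \widetilde P_g(\Phi)$ lying in $\bigoplus_{g\in F}\mathfrak C_g$, and by an equicontinuity argument these approximate $\Phi$ in norm. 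Thus $\mathfrak C = \widetilde{\bigoplus}_{g\in\G}\mathfrak C_g$ is a topologically $\G$-graded $C^*$-algebra in the sense of Definition \ref{claro}.

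Next, since $\wG$ is compact Abelian, $\G = \widehat{\wG}$ is discrete Abelian, hence rigidly symmetric by Remark \ref{lacantors}(a). Theorem \ref{teoremix}(i) then gives that $\ell^1(\mathfrak C) = \bigoplus_g^{1,\alpha}\mathfrak C_g$ is a symmetric Banach $^*$-algebra, and Theorem \ref{teoremix}(ii) shows it is inverse closed in $\mathfrak C$. Reducedness is automatic because $\ell^1(\mathfrak C)$ is realized as a $^*$-subalgebra of the $C^*$-algebra $\mathfrak C$, so its universal $C^*$-seminorm dominates the restriction of the $C^*$-norm of $\mathfrak C$ and is therefore a norm.

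The main obstacle in this proof is really just the density of $\bigoplus_g \mathfrak C_g$; everything else is a matter of unwinding definitions and citing Theorem \ref{teoremix}. The density argument is standard harmonic analysis on compact groups but should be written with enough care to justify the application of the grading framework.
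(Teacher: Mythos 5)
Your proposal is correct and follows the same route the paper takes: verify that the spectral subspaces $\mathfrak C_g$ of the dual action form a topological $\G$-grading (with the contractive projections $\widetilde P_g(\Phi)=\int_{\wG}\overline{\chi(g)}\,\alpha_\chi(\Phi)\,d\chi$) and then apply Theorem \ref{teoremix} using the rigid symmetry of discrete Abelian groups. The only difference is that you spell out the density of $\bigoplus_g\mathfrak C_g$ via a Fej\'er-type summability kernel, a standard point the paper dismisses as ``easy to see.''
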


\begin{rem}\label{crucix}
Using normal subgroups of $\G$ one can increase the family of symmetric subalgebras. If, as above, $\big(\mathfrak C,\alpha,\wG\,\big)$ is a continuous action of the dual of the Abelian discrete group $\G$ and $\n$ is a normal subgroup, the dual $\widehat{\G/\n}$ of the quotient is isomorphic to the closed subgroup
\begin{equation}\label{clarix}
{\rm Ann\big(\n\!\mid\!\wG\big)}:=\big\{\chi\in\wG\,\big\vert\,\chi|_\n=1\big\}\,.
\end{equation}
Thus from $\alpha$ we deduce a continuous action $\alpha^\n$ of $\widehat{\G/\n}$ on $\mathfrak C$\,, first by restricting $\alpha$ to ${\rm Ann\big(\n\!\mid\!\wG\big)}$ and then composing with the isomorphism. In this way we obtain the new {\it symmetric} Banach $^*$-algebra
\begin{equation*}\label{perux}
\ell^1\big(\mathfrak C^{\G/\n}\big)\equiv\bigoplus_{\gamma\in\G/\n}^{1,\alpha^\n}\mathfrak C_\gamma\,.
\end{equation*}
Extreme cases are $\ell^1\big(\mathfrak C^{\G/\{\e\}}\big)=\ell^1(\mathfrak C)$ and $\ell^1\big(\mathfrak C^{\G/\G\}}\big)=\mathfrak C$\,.
\end{rem}

Crossed products assigned to partial actions of (discrete) groups $\G$ on (unital) $C^*$-algebras are among the most important examples of graded $C^*$-algebras, and they were our initial motivation. In \cite[Thm.\,27.11]{Ex2} they are characterized among the larger class, while \cite[Thm.\,4.1]{QR} gives a Landstad-type description of them in terms of the associated coaction. In many cases what we are actually given is the partial action; the grading comes afterwards. Now the discrete group $\G$ is no longer Abelian.

\begin{defn}\label{bazika}
A {\it partial action} of $\G$ on the $C^*$-algebra $\A$ is a pair $\big(\{\A_g\}_{g\in \G},\{\te_g\}_{g\in\G}\}\big)$\,,
where for each $g\in \G$\,, $\A_g$ is a closed two-sided ideal in $\A$\,, $\te_g$ is a $^*$-isomorphism from $\A_{g^{-1}}$ onto $\A_g$\,, satisfying the following postulates, for all $g,h\in\G$\,:
\begin{enumerate}
\item[(i)] $\A_\e=\A$ and $\te_\e$ is the identity automorphism of $\A$\,,
\item[(ii)] $\te_g\!\left(\A_{g^{-1}}\!\cap \A_h\right)\subset\A_{gh}$\,,
\item[(iii)] $\te_g\big(\te_h(a)\big)=\te_{gh}(a)$\,, $\forall\,a\in\A_{h^{-1}}\!\cap\A_{(gh)^{-1}}$\,.
\end{enumerate}
\end{defn}

We also denote by $(\A,\te,\G)$ the partial dynamical system. It follows easily that we have $\te_{g^{-1}}\!=\te_g^{-1}$.
Our main object is the following vector space:
\begin{equation*}\label{thetika}
\ell^1_\te(\G;\A):= \left\{\Phi\in\ell^1({\sf G};\A)\mid\Phi(g)\in \A_g \,,\,\forall\,g\in \G\right\},
\end{equation*}
which is obviously closed. For $\Phi,\Psi\in\ell^1_\te(\G;\A)$ we define the product
\begin{equation*}\label{strugure}
\left( \Phi\star_{\theta} \Psi\right)(g)  =\sum_{h\in \G} \theta_h\!\left[\theta_{h}^{-1}\big(\Phi(h)\big)\Psi\big(h^{-1}g\big)\right]
\end{equation*}
and the involution $\Phi^{\star_\theta}(g) = \theta_g\!\left[\Phi\big(g^{-1}\big)\right]^*.$ Then $\big(\ell^1_\te(\G;\A),\star_{\theta},^{\star_{\theta}}\!,\|\cdot\|_{\ell^1_\te(\G;\A)}\big)$ is a Banach $^*$-algebra. We will refer to it as {\it the $\ell^1$-partial crossed product}. Its enveloping $C^*$-algebra  is denoted by $\A\!\rtt_\te\!\G$ and called {\it the partial crossed product associated to $(\A,\te,\G)$}\,. We used the fancy symbol $\rtt$ instead of the more usual one $\rtimes$ to stress that our crossed product is assigned to a {\it partial} action. 

\smallskip
We indicate the particularization of Theorem \ref{teoremix} to partial actions. Since $\G$ must be amenable, by \cite{Ex3} the full and the reduced partial crossed products are identical and possess a (faithful) positive contractive conditional expectation. 

\begin{thm}\label{corolyx}
Let $(\A,\te,\G)$ be a partial $C^*$-dynamical system with rigidly symmetric discrete group $\G$ and unital $C^*$-algebra $\A$\,. Then $\ell^1_\te(\G;\A)$ is symmetric and inverse closed in $\A\rtt_\te\!\G$\,. If $\,\Pi:\A\rtt_{\te}\!\G\to \mathbb{B}(\h)$ is a faithful representation, $\Pi\left[\ell^1_\te(\G;\A)\right]$ is inverse closed in $\mathbb{B}(\h)$\,.
\end{thm}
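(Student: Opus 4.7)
The plan is to show that the partial crossed product $\A\rtt_\te\G$ is, in a canonical way, a topologically $\G$-graded $C^*$-algebra in the sense of Definition \ref{claro}, and that under this identification the $\ell^1$-algebra of Definition \ref{glaro} coincides isometrically as a Banach $^*$-algebra with $\ell^1_\te(\G;\A)$. Once this is established, all three assertions follow by simply citing the corresponding parts of Theorem \ref{teoremix}.

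To produce the grading, for each $g\in\G$ I set $\mathfrak C_g$ to be the closure in $\A\rtt_\te\G$ of the image of $\{a\delta_g\mid a\in\A_g\}$ under the canonical inclusion $\ell^1_\te(\G;\A)\hookrightarrow\A\rtt_\te\G$. Axioms (i) and (ii) of Definition \ref{claro} are immediate from the formulas for $\star_\te$ and $^{\star_\te}$, together with the postulates $\te_g\!\left(\A_{g^{-1}}\!\cap\A_h\right)\subset\A_{gh}$ from Definition \ref{bazika}; the density of $\bigoplus_g\mathfrak C_g$ is obvious since $\ell^1_\te(\G;\A)$ is dense in its enveloping algebra. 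For the \emph{topological} character of the grading, one needs a continuous projection onto $\mathfrak C_\e=\A$\,; this is exactly what Exel's characterization \cite[Thm.\,27.11]{Ex2} provides, and in our amenable setting (Remark \ref{lantors}) it is furnished concretely by the canonical faithful contractive conditional expectation $E:\A\rtt_\te\G\to\A$ of \cite[Sect.\,3]{MC} which restricts to $\Phi\mapsto\Phi(\e)$ on $\ell^1_\te(\G;\A)$.

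With the grading in hand, the identification $\ell^1(\A\rtt_\te\G)\cong\ell^1_\te(\G;\A)$ is tautological: the multiplication $\star_\te$ and involution $^{\star_\te}$ are, by construction, the restrictions of the corresponding operations on $\A\rtt_\te\G$ to the algebraic direct sum; and since the embeddings $\A_g\hookrightarrow\mathfrak C_g$ are isometric, the norms $\sum_{g}\|\widetilde P_g(\Phi)\|$ and $\sum_g\|\Phi(g)\|_\A$ coincide. Applying Theorem \ref{teoremix}(i) gives symmetry of $\ell^1_\te(\G;\A)$; part (ii) together with the fact that $\A\rtt_\te\G$ is the enveloping $C^*$-algebra of $\ell^1_\te(\G;\A)$ gives inverse closedness in $\A\rtt_\te\G$; and part (iii) gives inverse closedness of $\Pi\!\left[\ell^1_\te(\G;\A)\right]$ in $\mathbb B(\h)$ for any faithful representation $\Pi$\,.

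The only genuinely non-formal step is verifying that the copies $\mathfrak C_g$ of $\A_g$ sit isometrically inside $\A\rtt_\te\G$ and that $P_\e$ extends continuously; as noted, both rest on amenability via the faithful conditional expectation. Everything else is bookkeeping around the definitions, and the heavy lifting has already been done in Theorem \ref{teoremix}.
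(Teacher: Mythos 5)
Your proposal is correct and follows essentially the same route as the paper: exhibit $\A\rtt_\te\G$ as a topologically $\G$-graded $C^*$-algebra with fibers the singleton-supported elements, identify $\ell^1(\A\rtt_\te\G)$ with $\ell^1_\te(\G;\A)$, and invoke Theorem \ref{teoremix}. You simply spell out in more detail (via the conditional expectation and the citations to \cite{Ex2} and \cite{MC}) what the paper compresses into a reference to Exel's book.
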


\begin{proof}
The crossed product $\mathfrak C:=\A\!\rtt_\te\!\G$ is a topologically graded $C^*$-algebra \cite{Ex2}, where 
\begin{equation*}\label{verita}
\mathfrak C_g=\big\{\Phi\in\ell^1_\te({\sf G};\A)\,\big\vert\,{\rm supp}(\Phi)\subset\{g\}\big\}\,,\quad\forall\,g\in\G\,.
\end{equation*}
Its $\ell^1$-algebra (with a slight reinterpretation) is
\begin{equation*}\label{veveriz}
\ell^1(\mathfrak C)\equiv\ell^1\big(\A\!\rtt_\te\!\G\big)=\ell^1_\te({\sf G};\A)\,.
\end{equation*}
Thus we can apply Theorem \ref{teoremix}. 
\end{proof}

\begin{rem}\label{apucalaie}
The result can also be obtained directly, by means specific to partial action. To prove the isometric linear embedding $\ell^1_\te({\sf G};\A)\hookrightarrow\ell^1\big(\G;\mathbb B(\h))$ one uses the map 
$\big[T(\Phi)\big]:=\rho\big[\Phi(g)\big]u_g$\,, where $(\rho,u)$ is a covariant representation of $(\A,\te,\G)$ in the Hilbert space $\h$\,, as in \cite[Sect.\ 13]{Ex2}. Here $\rho$ should be a {\it faithful} representation of $\A$ and $u$ a {\it partial} representation of $\G$\,, connected by an equivariance condition.
\end{rem}

\subsection{Inverse closedness modulo ideals.}\label{idealuri}

\begin{defn}\label{idealix}
Let $\mathfrak{K}$ be an ideal of the unital C$^*$-algebra $\mathfrak{C}$\,. The $^*$-subalgebra $\mathfrak{B}$ of $\mathfrak{C}$ is called {\it $\mathfrak{K}$-inverse closed} if $\mathfrak{B}/(\mathfrak{B}\cap\mathfrak{K})$ is inverse closed in $\mathfrak{C}/\mathfrak{K}$\,.
\end{defn}

\begin{ex}\label{fredholmix}
If $\,\mathfrak{K}=\{0\}$\,, the notion coincides with that introduced in Definition \ref{symmetric}. But the main motivating case is as follows: Let $\mathfrak C$ be a $C^*$-algebra of bounded operators in the Hilbert space $\h$\,, containing the ideal $\mathbb K(\h)$ of all the compact operators. We recall that $T\in\mathbb B(\h)$ is called {\it Fredholm} if its range is closed and its kernel and its cokernel are both finite-dimensional. By Atkinson's Theorem, this happens exactly if the image of $T$ in the Calkin algebra $\mathbb B(\h)/\mathbb K(\h)$ is invertible. A $^*$-subalgebra $\mathfrak B$ of $\mathbb B(\h)$ will be called {\it Fredholm inverse closed} if the situation in Definition \ref{idealix} holds with $\mathfrak K=\mathbb K(\h)$\,. For $\mathfrak C$ we can take any $C^*$-algebra of $\mathbb B(\h)$ containing $\mathfrak B$\,.
\end{ex}

\begin{defn}\label{buhonero}
Let $\mathfrak C=\widetilde{\bigoplus}_{g}\mathfrak C_g$ be a graded $C^*$-algebra over the discrete group $\G$\,. 
The ideal $\mathfrak K$ of $\mathfrak C$ is {\it a graded ideal} (also called {\it induced ideal}) if the ideal generated by $\mathfrak K\cap\mathfrak C_\e$ coincides with $\mathfrak K$ (or, equivalently, that $\bigoplus_{g}\mathfrak K\cap\mathfrak C_g$ is dense in $\mathfrak K$).
\end{defn}

\begin{thm}\label{ruye}
Let $\mathfrak C=\widetilde{\bigoplus}_{g}\mathfrak C_g$ be a topologically graded $C^*$-algebra over the discrete rigidly symmetric group $\G$ and $\mathfrak K$ a graded ideal. Then $\ell^1(\mathfrak C)$ is $\mathfrak K$-inverse closed in $\mathfrak C$\,.
\end{thm}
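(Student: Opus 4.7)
The plan is to pass to the quotient: I would first equip $\mathfrak C/\mathfrak K$ with a topological $\G$-grading, apply Theorem \ref{teoremix}(ii) there to obtain inverse closedness of $\ell^1(\mathfrak C/\mathfrak K)$ in $\mathfrak C/\mathfrak K$, and then identify this $\ell^1$-algebra with $\ell^1(\mathfrak C)/\bigl(\ell^1(\mathfrak C)\cap\mathfrak K\bigr)$ via the quotient map $q:\mathfrak C\to\mathfrak C/\mathfrak K$. The natural candidate for the grading is $(\mathfrak C/\mathfrak K)_g:=(\mathfrak C_g+\mathfrak K)/\mathfrak K$, and the key preliminary observation is that $\widetilde P_g(\mathfrak K)\subset\mathfrak K$ for every $g\in\G$. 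This follows from the graded-ideal hypothesis: on the dense subspace $\bigoplus_g(\mathfrak K\cap\mathfrak C_g)$ of $\mathfrak K$ the inclusion is immediate, and continuity of $\widetilde P_g$ together with closedness of $\mathfrak K$ propagates it to all of $\mathfrak K$. Consequently each $\widetilde P_g$ descends to a contraction $\overline P_g$ on $\mathfrak C/\mathfrak K$ which is idempotent with closed range $(\mathfrak C/\mathfrak K)_g$; the algebraic axioms of Definition \ref{claro} transport across $q$, density of $\bigoplus_g(\mathfrak C/\mathfrak K)_g$ follows from surjectivity of $q$, and the sum is algebraically direct because a relation $\sum_g c_g\in\mathfrak K$ (with $c_g\in\mathfrak C_g$) implies $c_g=\widetilde P_g(\sum_h c_h)\in\mathfrak K$. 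Topologicality is witnessed by the bounded $\overline P_\e$, so Theorem \ref{teoremix}(ii) applies to $\mathfrak C/\mathfrak K$.

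Next I would show that $q$ restricts to a surjective $^*$-homomorphism $\ell^1(\mathfrak C)\to\ell^1(\mathfrak C/\mathfrak K)$ with kernel $\ell^1(\mathfrak C)\cap\mathfrak K$; contractivity and the kernel computation are routine from $\|\overline P_g(q(\Phi))\|\le\|\widetilde P_g(\Phi)\|$. For surjectivity, given $\Xi\in\ell^1(\mathfrak C/\mathfrak K)$ and each $g$, I would choose any lift $c_g\in\mathfrak C$ of $\overline P_g(\Xi)$ with $\|c_g\|\le 2\|\overline P_g(\Xi)\|$ and then replace it by $\widetilde P_g(c_g)\in\mathfrak C_g$: the intertwining $q\circ\widetilde P_g=\overline P_g\circ q$ together with $\overline P_g^{\,2}=\overline P_g$ ensures that $\widetilde P_g(c_g)$ is still a lift of $\overline P_g(\Xi)$, while contractivity of $\widetilde P_g$ preserves the norm bound. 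The series $\Psi:=\sum_g\widetilde P_g(c_g)$ is then absolutely summable with $\|\Psi\|_{\ell^1(\mathfrak C)}\le 2\|\Xi\|_{\ell^1(\mathfrak C/\mathfrak K)}$ and satisfies $q(\Psi)=\Xi$. Combining the two steps, if $\Phi\in\ell^1(\mathfrak C)$ has $q(\Phi)$ invertible in $\mathfrak C/\mathfrak K$, then $q(\Phi)^{-1}$ lies in $\ell^1(\mathfrak C/\mathfrak K)$ by the first step, and lifts to some $\Psi\in\ell^1(\mathfrak C)$ by the second, giving $[\Psi]=[\Phi]^{-1}$ in $\mathfrak C/\mathfrak K$.

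The main obstacle is precisely this surjectivity step. A direct lifting of each component $\overline P_g(\Xi)$ through the quotient $\mathfrak C_g\twoheadrightarrow(\mathfrak C_g+\mathfrak K)/\mathfrak K$ with controlled norm would require $\mathfrak C_g+\mathfrak K$ to be closed in $\mathfrak C$, which is not automatic for a closed subspace plus a closed ideal. The trick of lifting first into all of $\mathfrak C$ and then projecting back to $\mathfrak C_g$ via $\widetilde P_g$ circumvents this delicate closedness issue using only the contractivity of $\widetilde P_g$, and makes the $\mathfrak K$-graded version of Theorem \ref{teoremix} a clean corollary rather than a genuinely new argument.
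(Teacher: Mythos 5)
Your proposal is correct and follows essentially the same route as the paper's proof: endow $\mathfrak C/\mathfrak K$ with the quotient topological grading, identify $\ell^1(\mathfrak C)/\bigl(\ell^1(\mathfrak C)\cap\mathfrak K\bigr)$ with $\ell^1(\mathfrak C/\mathfrak K)$ via a norm-controlled componentwise lifting, and apply Theorem \ref{teoremix}(ii) to the quotient. The only (minor, and welcome) differences are that the paper outsources the quotient grading and the intertwining $\widetilde Q_g\circ\kappa=\kappa\circ\widetilde P_g$ to \cite[Prop.\,23.10]{Ex2} and \cite[Prop.\,3.11]{Ex3} and lifts each component directly inside $\mathfrak C_g$ with an $\varepsilon$-loss, whereas you derive the grading by hand from $\widetilde P_g(\mathfrak K)\subset\mathfrak K$ and use the lift-into-$\mathfrak C$-then-apply-$\widetilde P_g$ trick at the cost of a factor $2$, which neatly sidesteps the question of whether $\kappa|_{\mathfrak C_g}$ is a metric surjection onto $(\mathfrak C/\mathfrak K)_g$.
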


\begin{proof}
One needs to show that $\ell^1(\mathfrak C)/\ell^1(\mathfrak C)\cap\mathfrak K$ is inverse closed in $\mathfrak C/\mathfrak K$\,.
Let us denote by $\kappa:\mathfrak C\to\mathfrak C/\mathfrak K$ the quotient map. By \cite[Prop.\,23.10]{Ex2} (or by \cite[Prop.\,3.11]{Ex3}), under the stated conditions upon the graded algebra and the ideal, $\mathfrak C/\mathfrak K$ is topologically graded by the subspaces 
$$
\big\{(\mathfrak C/\mathfrak K)_g:=\kappa(\mathfrak C_g)\,\big\vert\,g\in\G\big\}\,.
$$ 
The corresponding linear contraction $\widetilde Q_g:\mathfrak C/\mathfrak K\to(\mathfrak C/\mathfrak K)_g$ satisfies
\begin{equation}\label{traznau}
\widetilde Q_g\circ\kappa=\kappa\circ\widetilde P_g\,,\quad\forall\,g\in\G\,.
\end{equation}
Using this and the form of the $\ell^1$-norms, one shows immediately that $\kappa$ sends $\ell^1(\mathfrak C)$ into $\ell^1(\mathfrak C/\mathfrak K)$ contractively. Its kernel is clearly $\ell^1(\mathfrak C)\cap\mathfrak K$\,. Let us show its surjectivity, starting from 
$$
\varphi:=\sum_{g\in\G}\varphi_g\equiv\sum_{g\in\G}\widetilde Q_g(\varphi)\in\ell^1(\mathfrak C/\mathfrak K)
$$ 
(unconditional convergence in $\mathfrak C/\mathfrak K$)\,. Let $\{\beta_g\!\mid\!g\in\G\}$ be a summable family of positive numbers. For each $g$, there is an element $\Phi_g\in\mathfrak C_g$ such that $\kappa(\Phi_g)=\varphi_g$ and $\p\!\Phi_g\!\p_\mathfrak C\,\le\,\p\!\varphi_g\!\p_{\mathfrak C/\mathfrak K}+\beta_g$\,. Then $\Phi:=\sum_{g}\Phi_g\in\ell^1(\mathfrak C)$ and $\kappa(\Phi)=\varphi$\,.
So $\ell^1(\mathfrak C)/\ell^1(\mathfrak C)\cap\mathfrak K$ may be identified with $\ell^1(\mathfrak C/\mathfrak K)$\,. We apply Theorem \ref{teoremix} (ii) to the topologically graded quotient algebra $\mathfrak C/\mathfrak K$ and finish the proof.
\end{proof}

The particular case of partial actions is worth mentioning. Suppose that $(\mathcal{A},\theta,\G)$ is a partial dynamical system with discrete group $\G$ and that $\mathcal{K}$ is an ideal of $\mathcal{A}$ that is $\theta$-invariant: 
$$
\theta_g\big(\A_{g^{-1}}\!\cap\mathcal{K}\big)\subset \mathcal{K}\,,\quad\forall\,g\in \G\,.
$$ 
We denote by the same letter $\theta$ the action of $\G$ by partial automorphisms of $\mathcal{K}$ defined by restrictions. 
One gets the C$^*$-partial dynamical system $(\mathcal{K},\theta,\G)$\,. 
The partial crossed product $\mathcal{K}\rtt_{\theta}\G$ may be identified with an ideal of $\mathcal{A}\rtt_{\theta}\G$\,. Under this identification, $\ell^1_{\theta}(\G;\mathcal{K})$ becomes an ideal of $\ell^1_{\theta}(\G;\mathcal{A})$ in the natural way: the $\ell^1$-function $\Phi:\G\rightarrow \mathcal{K}$ is taken to be $\mathcal{A}$-valued. One can use the exactness of the partial crossed product construction \cite[Sect.\,3]{ELQ} to prove

\begin{thm}\label{theoremix}
Assume that the discrete group $\G$ is rigidly symmetric. The Banach $^*$-algebra $\ell^1_{\theta}(\G;\mathcal{A})$ is $\mathcal{K}\!\rtt_{\theta}\!\G$-inverse closed in the partial crossed product $\mathcal{A}\!\rtt_{\theta}\!\G$ for any $\theta$-invariant ideal $\mathcal{K}$ of $\mathcal{A}$\,.
\end{thm}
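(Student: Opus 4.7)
The strategy is to recognize the statement as the partial-action incarnation of Theorem \ref{ruye}. From the proof of Theorem \ref{corolyx} we already know that $\mathfrak{C}:=\mathcal{A}\rtt_\theta\G$ carries a canonical topological $\G$-grading, with $\mathfrak{C}_g$ consisting of those $\Phi\in\ell^1_\theta(\G;\mathcal{A})$ whose support is contained in $\{g\}$ (so that $\mathfrak{C}_g\cong \mathcal{A}_g$), and that the associated $\ell^1$-algebra $\ell^1(\mathfrak{C})$ is exactly $\ell^1_\theta(\G;\mathcal{A})$. Consequently, the statement will follow by direct application of Theorem \ref{ruye} as soon as we show that $\mathfrak{K}:=\mathcal{K}\rtt_\theta\G$ is a \emph{graded ideal} of $\mathfrak{C}$ in the sense of Definition \ref{buhonero}.

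The first ingredient is that $\mathfrak{K}$ is indeed an ideal of $\mathfrak{C}$; this is exactly the content of the exactness of the partial crossed product \cite[Sect.\,3]{ELQ} (applicable since $\G$, being rigidly symmetric, is amenable by Remark \ref{lantors}). The $\theta$-invariance of $\mathcal{K}$ guarantees that $\theta$ descends to a partial action $\bar\theta$ on $\mathcal{A}/\mathcal{K}$, and exactness provides the short exact sequence
\begin{equation*}
0\longrightarrow \mathcal{K}\rtt_\theta\G \longrightarrow \mathcal{A}\rtt_\theta\G \longrightarrow (\mathcal{A}/\mathcal{K})\rtt_{\bar\theta}\G \longrightarrow 0\,,
\end{equation*}
which identifies $\mathfrak{K}$ with an ideal of $\mathfrak{C}$ (as tacitly assumed in the statement).

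The second ingredient is the graded property. Under the identification $\mathfrak{C}_g\cong \mathcal{A}_g$, one has $\mathfrak{K}\cap\mathfrak{C}_g\cong \mathcal{K}\cap\mathcal{A}_g$, so that the algebraic direct sum $\bigoplus_{g\in\G}\mathfrak{K}\cap\mathfrak{C}_g$ consists precisely of the finitely supported functions in $\ell^1_\theta(\G;\mathcal{K})$. This space is dense in $\ell^1_\theta(\G;\mathcal{K})$, which in turn is dense in its enveloping $C^*$-algebra $\mathcal{K}\rtt_\theta\G=\mathfrak{K}$; thus the second (equivalent) formulation in Definition \ref{buhonero} is satisfied.

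With both points in place, Theorem \ref{ruye} applied to the pair $(\mathfrak{C},\mathfrak{K})$ yields that $\ell^1(\mathfrak{C})=\ell^1_\theta(\G;\mathcal{A})$ is $\mathfrak{K}$-inverse closed in $\mathfrak{C}$, i.e.\ that $\ell^1_\theta(\G;\mathcal{A})$ is $\mathcal{K}\rtt_\theta\G$-inverse closed in $\mathcal{A}\rtt_\theta\G$, which is the desired statement. The only step genuinely outside the general machinery developed above is the exactness fact imported from \cite{ELQ}; the rest is a matter of unwinding the partial-crossed-product definitions against the grading definitions.
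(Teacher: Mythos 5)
Your proof is correct and follows the route the paper itself endorses in the sentence immediately after the statement: realize $\mathcal{K}\rtt_\theta\G$ as a graded ideal of the topologically graded algebra $\mathcal{A}\rtt_\theta\G$ and invoke Theorem \ref{ruye}. The paper's own one-line argument instead uses exactness directly (passing to $(\mathcal{A}/\mathcal{K})\rtt_{\bar\theta}\G$ and applying the partial-crossed-product result), but the two derivations are essentially the same, and your verification that the ideal is graded supplies exactly the detail the paper leaves implicit.
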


Theorem \ref{theoremix} can also be obtained as a consequence of Theorem \ref{ruye}, since partial crossed products are topologically graded, and the ideal $\mathcal{K}\!\rtt_{\theta}\!\G$ is indeed graded under the stated assumptions.

\smallskip
The situation in Exemple \ref{fredholmix} is the most interesting:

\begin{cor}\label{honduras}
Let $(\A,\theta,\G)$ be a partial C$^*\!$-dynamical system with discrete rigidly symmetric group $\G$ and $\K$ a $\theta$-invariant ideal in $\A$\,. Let $\Pi\!:\!\A\rtt_{\theta}\!\G \to \mathbb{B}(\mathcal{H})$ be a faithful representation such that $\Pi\big(\K\rtt_{\theta}\!\G\big)=\mathbb{K}(\mathcal{H})$\,. Then the Banach $^*$-algebra $\Pi[\ell^1_{\theta}(\G;\A)]$ is Fredholm inverse closed.
\end{cor}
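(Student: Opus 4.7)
The plan is to transport Theorem \ref{theoremix} through the faithful representation $\Pi$ and then invoke the characterization of Fredholm inverse closedness from Example \ref{fredholmix}. The hypotheses already do almost all the work; there is no genuine obstacle, only a bookkeeping argument.

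First, since $\Pi$ is faithful on the $C^*$-algebra $\A\rtt_\te\G$, it is an isometric $^*$-isomorphism onto its image $\Pi(\A\rtt_\te\G)\subset\mathbb B(\h)$. By hypothesis it sends the ideal $\K\rtt_\te\G$ exactly onto $\mathbb K(\h)$, and it sends $\ell^1_\te(\G;\A)$ onto $\Pi[\ell^1_\te(\G;\A)]$. Since $\mathfrak K$-inverse closedness as in Definition \ref{idealix} is a purely algebraic property preserved by $^*$-isomorphisms of $C^*$-algebras, Theorem \ref{theoremix} transports verbatim: $\Pi[\ell^1_\te(\G;\A)]$ is $\mathbb K(\h)$-inverse closed in $\Pi(\A\rtt_\te\G)$.

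Now observe that $\Pi(\A\rtt_\te\G)$ is a $C^*$-subalgebra of $\mathbb B(\h)$ containing both $\mathbb K(\h)$ and $\Pi[\ell^1_\te(\G;\A)]$, so the situation fits exactly the framework of Example \ref{fredholmix} with $\mathfrak C=\Pi(\A\rtt_\te\G)$. Take $T\in\Pi[\ell^1_\te(\G;\A)]$ Fredholm on $\h$. By Atkinson's theorem, its class is invertible in the Calkin algebra $\mathbb B(\h)/\mathbb K(\h)$, hence a fortiori in the $C^*$-subalgebra $\Pi(\A\rtt_\te\G)/\mathbb K(\h)$ (any $C^*$-subalgebra of a $C^*$-algebra is inverse closed). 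Applying the $\mathbb K(\h)$-inverse closedness established in the previous paragraph, the class of $T$ is invertible in $\Pi[\ell^1_\te(\G;\A)]\big/\big(\Pi[\ell^1_\te(\G;\A)]\cap\mathbb K(\h)\big)$, so a parametrix for $T$ can be found inside $\Pi[\ell^1_\te(\G;\A)]$ itself. This is precisely what Fredholm inverse closedness asserts.

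The only point worth double-checking is that the containment $\mathbb K(\h)\subset\Pi(\A\rtt_\te\G)$ used above genuinely holds, but this is immediate from the hypothesis $\Pi(\K\rtt_\te\G)=\mathbb K(\h)$. Consequently the corollary follows with no further ingredients beyond Theorem \ref{theoremix} and the observation that faithful representations preserve the inverse closedness modulo an ideal.
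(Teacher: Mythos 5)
Your argument is correct and is exactly the route the paper intends (the corollary is stated without proof as an immediate consequence of Theorem \ref{theoremix} combined with Example \ref{fredholmix}): transport the $\mathcal K\rtt_\theta\G$-inverse closedness through the isometric $^*$-isomorphism $\Pi$ onto its image, note that $\mathbb K(\h)=\Pi(\K\rtt_\theta\G)\subset\Pi(\A\rtt_\theta\G)$, and conclude via Atkinson's theorem together with the inverse closedness of $C^*$-subalgebras of the Calkin algebra. Your explicit verification of the containment and of the independence of the ambient algebra $\mathfrak C$ is exactly the bookkeeping the paper leaves to the reader.
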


\subsection{Other types of decay}\label{decay}

\begin{defn}\label{sconx}
A {\it weight} on the discrete group $\G$ is a function $\nu: \G\to [1,\infty)$ satisfying everywhere
\begin{equation}\label{submultiplicative}
 \nu(gh)\leq \nu(g)\nu(h)\,,\quad\nu(g^{-1})=\nu(g)\,.
\end{equation}
Let $\mathfrak C$ be a topologically graded $C^*$-algebra. On $\,\bigoplus_{g}\!\mathfrak C_g$  we can introduce the norm
\begin{equation}\label{normix}
\p\!\Phi\!\p_{\ell^{1,\nu}(\mathfrak C)}\,:=\sum_{g\in\G}\nu(g)\!\p\!P_g(\Phi)\!\p.
\end{equation}
The completion in this norm, denoted by $\ell^{1,\nu}(\mathfrak C)$\,, can be seen as a Banach $^*$-algebra with the algebraic structure inherited from $\ell^1(\mathfrak C)$ (or from $\mathfrak C$)\,. 
\end{defn} 

\begin{thm}\label{oix}
Let $\G$ be a rigidly symmetric discrete group and $\nu$ a weight. Assume that there exists a generating subset $V$ of $\,\G$ containing the unit $\e$ such that
\begin{enumerate}
\item[(a)] 
the following uGRS (uniform Gelfand-Raikov-Shilov) condition holds:
\begin{equation}\label{limitConditionOfWeight}
\lim_{n\rightarrow \infty}\sup_{g_1,\dots,g_n \in V}\nu(g_1\cdots g_n)^{\frac{1}{n}}=1\,,
\end{equation}
\item[(b)] 
for some finite constant $C$ one has for any $n\in\mathbb{N}$
\begin{equation}\label{limitt}
\sup_{g\in V^n\setminus V^{n-1}}\nu(g) \leq C\!\inf_{g\in V^n\setminus V^{n-1}}\nu(g)\,.
\end{equation}
\end{enumerate}
Then $\ell^{1,\nu}(\mathfrak C)$ is a symmetric Banach $^*$-algebra for every topologically $\G$-graded $C^*$-algebra $\mathfrak C$\,.
\end{thm}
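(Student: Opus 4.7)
The plan is to adapt the argument of Theorem \ref{teoremix}(i) to the weighted category. As a first step I would verify that the embedding of Proposition \ref{isometrix} is weight-compatible. Fixing a faithful representation $\pi:\mathfrak C\to\mathbb B(\h)$ and defining $T$ by $[T(\Phi)](g):=\pi_g\big[\widetilde P_g(\Phi)\big]$ as in (\ref{tronc}), one has
\[
\p\!T(\Phi)\!\p_{\ell^{1,\nu}(\G,\mathbb B(\h))}\,=\sum_{g\in\G}\nu(g)\big\|\pi_g\big[\widetilde P_g(\Phi)\big]\big\|\,=\,\p\!\Phi\!\p_{\ell^{1,\nu}(\mathfrak C)}
\]
because each $\pi_g$ is isometric; the multiplicativity and involution identities do not involve the weight, so they carry over verbatim from Proposition \ref{isometrix}. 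Hence $T:\ell^{1,\nu}(\mathfrak C)\hookrightarrow\ell^{1,\nu}(\G)\otimes\mathbb B(\h)$ is an isometric $^*$-morphism. Since closed $^*$-subalgebras of symmetric Banach $^*$-algebras are symmetric, the theorem reduces to showing that $\ell^{1,\nu}(\G)\otimes \B$ is symmetric for every unital $C^*$-algebra $\B$, which is a weighted analog of rigid symmetry.

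To establish this weighted analog I would follow the Fendler-Gröchenig-Leinert strategy. Since $\G$ is rigidly symmetric, $\ell^1(\G)\otimes\B$ is symmetric and, being reduced, inverse closed in its enveloping $C^*$-algebra $\mathfrak D$. By Hulanicki's criterion it suffices to prove the spectral radius identity $\rho_{\ell^{1,\nu}(\G)\otimes\B}(a)=\p\!a\!\p_{\mathfrak D}$ for every self-adjoint $a\in\ell^{1,\nu}(\G)\otimes\B$. For such an $a$ whose support is contained in $V^m$, each power $a^k$ is supported in $V^{km}$, so
\[
\p\!a^k\!\p_{\ell^{1,\nu}(\G)\otimes\B}\,\le\,\Big(\sup_{g\in V^{km}}\!\nu(g)\Big)\p\!a^k\!\p_{\ell^1(\G)\otimes\B}\,.
\]
Taking $k$-th roots and letting $k\to\infty$, the uGRS condition (a) forces the supremum factor to tend to $1$, whence $\rho_{\ell^{1,\nu}}(a)\le \rho_{\ell^1}(a)=\p\!a\!\p_{\mathfrak D}$; the reverse inequality is automatic.

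The remaining step, and the main technical obstacle, is to upgrade the spectral radius identity from compactly supported elements to arbitrary self-adjoint $a\in\ell^{1,\nu}(\G)\otimes\B$. This is where condition (b) enters: truncating $a$ to the annular shells $V^n\setminus V^{n-1}$ and exploiting the comparability estimate (\ref{limitt}) allows the weight on each shell to be replaced, up to the constant $C$, by a single representative value, and then permits an estimate for $a^k$ which preserves the subexponential control under the $k$-th root. Making this truncation-and-approximation argument quantitative is the delicate part; once in hand, Hulanicki's criterion delivers the symmetry of $\ell^{1,\nu}(\G)\otimes\B$, and through the embedding $T$ the symmetry of $\ell^{1,\nu}(\mathfrak C)$.
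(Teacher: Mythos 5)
Your overall architecture is reasonable, and your first reduction (a weighted version of Proposition \ref{isometrix} giving an isometric $^*$-embedding $\ell^{1,\nu}(\mathfrak C)\hookrightarrow\ell^{1,\nu}(\G;\B)$, so that everything rests on the symmetry of the weighted projective tensor product) is in fact exactly the alternative route the paper itself records in Remark \ref{labix}. The problem is that you then stop precisely at the step that carries all the content. Your spectral-radius computation is only done for self-adjoint elements supported in some $V^m$, and you explicitly defer the passage to general elements of $\ell^{1,\nu}$ to an unspecified ``truncation-and-approximation argument.'' That passage is not a routine density argument: the spectral radius is upper but not lower semicontinuous, so from $r(a_N)\le r_{\ell^1}(a_N)$ for finitely supported truncations $a_N\to a$ you cannot conclude $r(a)\le r_{\ell^1}(a)$; the inequality you can extract by continuity points the wrong way. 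Since Hulanicki's criterion needs the identity for \emph{all} self-adjoint elements (or at least on a dense subalgebra together with a genuine norm-controlled argument, which you have not supplied), the proof as written has a genuine gap exactly where you flag ``the delicate part.''

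The paper closes this gap without any truncation, by a transfer to $\mathbb Z$ that works for arbitrary $\Phi\in\ell^{1,\nu}(\mathfrak C)$: one defines the auxiliary weight $v(n):=\sup_{g\in V^{|n|}}\nu(g)$ on $\mathbb Z$ and the shell sums $b_m:=\sum_{g\in V^m\setminus V^{m-1}}\p P_g(\Phi)\p$, and proves the convolution majorization
\begin{equation*}
\p\Phi^n\p_{\ell^{1,\nu}(\mathfrak C)}\;\le\;\p b^n\p_{\ell^{1,v}(\mathbb Z)}\,,
\end{equation*}
valid with no support restriction. Condition (a) makes $v$ a GRS-weight on $\mathbb Z$, so $\ell^{1,v}(\mathbb Z)$ is symmetric by \cite{FGL0} and $r_{\ell^{1,v}(\mathbb Z)}(b)=r_{\ell^{1}(\mathbb Z)}(b)\le\p b\p_{\ell^1(\mathbb Z)}=\p\Phi\p_{\ell^1(\mathfrak C)}$; applying this to powers $\Phi^n$ yields $r_{\ell^{1,\nu}(\mathfrak C)}(\Phi)\le r_{\ell^{1}(\mathfrak C)}(\Phi)$ in full generality. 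Note also that condition (b) is used there for a different purpose than you suggest: it gives $C^{-1}\p b\p_{\ell^{1,v}(\mathbb Z)}\le\p\Phi\p_{\ell^{1,\nu}(\mathfrak C)}$, i.e.\ it guarantees that $b$ actually lies in $\ell^{1,v}(\mathbb Z)$ whenever $\Phi\in\ell^{1,\nu}(\mathfrak C)$, rather than serving to normalize the weight on each shell in a power estimate. To repair your write-up you should either import this $\mathbb Z$-transfer argument, or replace your last paragraph by a direct appeal to \cite[Th.\,3]{FGL} for the symmetry of $\ell^{1,\nu}(\G;\B)$ after checking that its hypotheses are met in your generality.
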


\begin{proof}
The problem of the symmetry of a weighted $\ell^1$-algebra as a $^*$-subalgebra of an unweighted one has been solved in \cite[Th.\,3]{FGL} in a more particular context. We are going to check that the arguments can be adapted to the more general case. The main idea is proving that, given an element $\Phi\in \ell^{1,\nu}(\mathfrak C)$\,, its spectral radius is the same as in $\ell^{1}(\mathfrak C)$\,. Then one applies Hulanicki's Lemma.

\smallskip
As $\nu(\cdot)\geq 1$\,, we have $\p\!\cdot\!\p_{\ell^1(\mathfrak{C})}\,\leq\,\p\cdot\,\p_{\ell^{1,\nu}(\mathfrak{C})}$\,;
the spectral radius formula implies that 
$$
r_{\ell^{1}(\mathfrak{C})}(\Phi)\leq r_{\ell^{1,\nu}(\mathfrak{C})}(\Phi)\,,\quad\forall\,\Phi\in {\ell^{1,\nu}(\mathfrak{C})}\,.
$$
To prove the opposite inequality, pick $V$ a generating set for $\G$ and define on $\mathbb Z$ the function
\begin{equation*}
 v(n):=\sup_{g\in V^{|n|}} \nu(g)\,.
\end{equation*}
Due to the uGRS condition, it is a weight and one has the obvious associated weighted space $\ell^{1,\nu}(\Z)$\,. By induction, from the equation \eqref{normix} we get 
\begin{equation*}
 \p\!\Phi^n \!\p_{\ell^{1,\nu}(\mathfrak{C})}\,\leq \sum_{g_1\in\G}\cdots\sum_{g_n\in\G}\nu(g_1\dots g_n)\p\!P_{g_1}(\Phi) \!\p\cdots \!\p\!P_{g_n}(\Phi) \!\p.
\end{equation*}
Since $\G=\bigsqcup_{m\in\mathbb N}(V^m\setminus V^{m-1})$\,, where $V^0=\emptyset$\,, we may split the sum accordingly. This yields
\begin{equation}\label{shocarix}
\p\!\Phi^n \!\p_{\ell^{1,\nu}(\mathfrak{C})}\,\leq\!\sum_{m_1,\dots,m_n=1}^{\infty}\sum_{V^{m_1}\setminus V^{m_1-1}}\cdots\sum_{V^{m_n}\setminus V^{m_n-1}} \nu(g_1\dots g_n)\p P_{g_1}(\Phi) \!\p\cdots \!\p P_{g_n}(\Phi) \!\p.
\end{equation}
If $g_j\in V^{m_j}\setminus V^{m_j-1}$, then $g_1\cdots g_n \in V^{m_1+\cdots +m_n}$ and so the weight is majorized by
\begin{equation*}
    \nu(g_1\cdots g_n)\leq \sup_{h\in V^{m_1+\cdots +m_n}} \nu(h) =v(m_1+\cdots m_n)\,.
\end{equation*}
Set $b_m:=\sum_{g\in V^m\setminus V^{m-1}}\!\p P_g(\Phi) \!\p$ and $b=(b_m)_{m\in\mathbb N}$\,. Then we have $\!\p\Phi \!\p_{\ell^1(\mathfrak C)}\,=\,\p b \!\p_{\ell^1(\mathbb N)}$\,. Also the condition \eqref{limitt} implies immediately that 
\begin{equation*}
    C^{-1}\!\p b\!\p_{\ell^{1,v}(\mathbb Z)}\,\leq \,\p\Phi \!\p_{\ell^{1,\nu}(\mathfrak C)}\,\leq\,\p\,b \!\p_{\ell^{1,v}(\mathbb Z)}.
\end{equation*}
Returning to \eqref{shocarix} we obtain
\begin{equation*}
    \!\p \Phi^n \!\p_{\ell^{1,\nu}(\mathfrak{C})}\,\leq\!\sum_{m_1,\dots,m_n=1}^{\infty}\!v(m_1+\cdots +m_n)\,b_{m_1}\cdots b_{m_n}\!=\;\p b^n \!\p_{\ell^{1,v}(\mathbb Z)}\,<\infty\,.
\end{equation*}
By its definition the weight $v$ on $\mathbb Z$ satisfies the GRS-condition, and $\ell^{1,v}(\mathbb Z)$ is symmetric by \cite{FGL0}. Hence
\begin{align*}
r_{\ell^{1,\nu}(\mathfrak{C})}(\Phi)&=\lim_{n\rightarrow\infty} \!\p \Phi^n \!\p_{\ell^{1,\nu}(\mathfrak{C})}^{1/n}\leq \lim_{n\rightarrow\infty} \!\p b^n \!\p_{\ell^{1,\nu}(\mathbb{Z})}^{1/n}\\
&=r_{\ell^{1,\nu}(\mathbb{Z})}(b)=r_{\ell^{1}(\mathbb{Z})}(b)\\
&\le\,\p b \!\p_{\ell^{1}(\mathbb{Z})}=\,\p \Phi \!\p_{\ell^{1}(\mathfrak{C})}.
\end{align*}
So for all $k\in\mathbb N$ we have
\begin{equation*}
r_{\ell^{1,\nu}(\mathfrak{C})}(\Phi)=r_{\ell^{1,\nu}(\mathfrak{C})}(\Phi^n)^{1/n}\leq\,\p \Phi^n \!\p_{\ell^{1}(\mathfrak{C})}^{1/n},
\end{equation*}
and by letting $n\rightarrow\infty$ we obtain the required inequality $\,r_{\ell^{1,\nu}(\mathfrak{C})}(\Phi)\leq r_{\ell^{1}(\mathfrak{C})}(\Phi)$\,.
\end{proof}

\begin{rem}\label{labix}
There is a different way to prove Theorem \ref{oix}. We define {\it the Beurling algebra associated to the weight $\nu$ and to the $C^*$-algebra $\mathcal B$} as
\begin{equation*}\label{sharpix}
\ell^{1,\nu}(\G;\mathcal B):=\big\{\Psi\, \vert \,\nu\Psi\in \ell^{1}(\G;\mathcal B)\big\}\,,
\end{equation*}
with norm $\p \cdot \!\p_{\ell^{1,\nu}(\G,\mathcal B)}\,:=\,\p\nu\,\cdot\,\!\p_{\ell^{1}(\G;\mathcal B)}$.
By \cite{FGL}, the assumptions on the weight imply that $\ell^{1,\nu}(\G;\mathcal B)$ is a symmetric Banach $^*$-algebra.
One then shows as in Proposition \ref{isometrix} an isometric embedding $\ell^{1,\nu}(\mathfrak C)\hookrightarrow\ell^{1,\nu}(\G;\mathcal B)$\,, for some $C^*$-algebra $\mathcal B$\,, and use this as in Theorem \ref{teoremix} to deduce the symmetry of $\ell^{1,\nu}(\mathfrak C)$\,.
\end{rem}

\section{Examples}\label{exemple}

\subsection{Inverse closed Banach $^*$-algebras from topological partial actions}\label{restrix}

We introduce now certain partial crossed product $C^*$-algebras associated to partial topological actions of the discrete group. We do not consider the most general case; in particular, we restrict generality in such a way to have certain natural faithful Hilbert space representations. In this way, transparent results on inverse closedness will be available.

\smallskip
So let $(X,\Theta,\G)$ be a {\it partial action} of the group $\G$ (assumed here to be countable) on the compact Hausdorff space $X$. In more detail, it could be denoted by
$\big\{\Theta_g:X_{g^{-1}}\!\to X_g\,\big\vert\, g\in\G\big\}$\,,
where each $X_g$ is an open subset of $X$ and $\Theta_g$ is a homeomorphism with domain $X_{g^{-1}}$ and image $X_g$\,. One requires
\begin{itemize}
\item 
$X_\e=X$ and $\Theta_\e={\rm id}_X$\,,
\item
for every $g,h\in\G$\,, the homeomorphism $\Theta_{gh}$ extends $\Theta_{g}\circ\Theta_{h}$ (this one with maximal domain).
\end{itemize}

In fact each locally compact partial dynamical system may be deduced by restricting a global one in an essentially canonical way, but sometimes the total space $\tilde X$ might fail to be Hausdorff \cite{Ab, Ex2}.

\smallskip
We deduce from $(X,\Theta,\G)$ a partial $C^*$-dynamical system $\big(C(X),\te,\G\big)$\,, where the ideals are 
\begin{equation*}\label{santodomingo}
\mathcal A_g\!:=C\big(X_{g}\big)\equiv\big\{a\in C(X)\,\big\vert\,a(x)=0\,,\,\forall\,x\notin X_{g}\big\}
\end{equation*}
and the isomorphisms are
\begin{equation*}\label{panama}
\te_g:C\big(X_{g^{-1}}\big)\to C\big(X_{g}\big)\,,\quad\te_g(a):=a\circ\Theta_{g^{-1}}\,.
\end{equation*}
Therefore, one can construct the Banach $^*$-algebra $\ell^1_\te\big(\G;C(X)\big)$ and its enveloping partial crossed product $C(X)\!\rtt_\te\!\G$\,. We deduce from Theorem \ref{teoremix}  

\begin{prop}\label{corolix}
If $\,\G$ is rigidly symmetric, $\ell^1_\te\big(\G,C(X)\big)$ is reduced, symmetric and inverse closed in the partial crossed product.
\end{prop}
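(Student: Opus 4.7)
The plan is to recognize Proposition \ref{corolix} as a direct specialization of Theorem \ref{corolyx} to the specific partial $C^*$-dynamical system arising from the topological partial action $(X,\Theta,\G)$, with the added observation that ``reduced'' follows automatically.

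First I would verify that $\big(C(X),\te,\G\big)$, as defined, really is a partial $C^*$-dynamical system in the sense of Definition \ref{bazika}. Compactness of $X$ gives that $C(X)$ is unital. Each $X_g$ is open in $X$, so $\mathcal A_g=C(X_g)=\{a\in C(X)\mid a\text{ vanishes off }X_g\}$ is a closed two-sided ideal of $C(X)$. The map $\te_g(a):=a\circ\Theta_{g^{-1}}$ is a $^*$-isomorphism from $C(X_{g^{-1}})$ onto $C(X_g)$ because $\Theta_{g^{-1}}:X_g\to X_{g^{-1}}$ is a homeomorphism. The three axioms translate routinely: $X_\e=X,\ \Theta_\e=\mathrm{id}_X$ gives (i); the domain of $\Theta_g\circ\Theta_h$ is $\Theta_{h^{-1}}(X_h\cap X_{g^{-1}})\subset X_{(gh)^{-1}}$ and its image sits in $X_{gh}$, which, after taking $C(\cdot)$, yields (ii); and the fact that $\Theta_{gh}$ extends $\Theta_g\circ\Theta_h$ gives the cocycle identity (iii).

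With this verification in place, Theorem \ref{corolyx}, applied to the rigidly symmetric group $\G$ and the unital $C^*$-algebra $C(X)$, yields at once that $\ell^1_\te\big(\G;C(X)\big)$ is symmetric and inverse closed in the partial crossed product $C(X)\!\rtt_\te\!\G$.

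It remains only to check that $\ell^1_\te\big(\G;C(X)\big)$ is reduced, i.e.\ that its universal $C^*$-seminorm is a norm. By construction, $C(X)\!\rtt_\te\!\G$ is the enveloping $C^*$-algebra of $\ell^1_\te\big(\G;C(X)\big)$, and since $\G$ is rigidly symmetric it is amenable (Remark \ref{lantors}), so the full and reduced partial crossed products coincide and carry a faithful positive conditional expectation onto $\mathcal A_\e=C(X)$. This makes the canonical map $\ell^1_\te\big(\G;C(X)\big)\hookrightarrow C(X)\!\rtt_\te\!\G$ injective, so the $C^*$-seminorm restricted from the partial crossed product is a genuine norm, dominated by the universal one; hence the universal seminorm itself is a norm.

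The only real work is the bookkeeping translation between the topological axioms for $(X,\Theta,\G)$ and the algebraic axioms of Definition \ref{bazika}; once this is done, the conclusion is immediate from Theorem \ref{corolyx} and amenability.
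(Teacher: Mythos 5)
Your proposal is correct and follows essentially the same route as the paper: the authors simply deduce Proposition \ref{corolix} from Theorem \ref{teoremix} (equivalently, from its partial-crossed-product specialization, Theorem \ref{corolyx}), exactly as you do. The only difference is that you spell out the routine verification that $(X,\Theta,\G)$ induces a partial $C^*$-dynamical system $\big(C(X),\te,\G\big)$ in the sense of Definition \ref{bazika} and the reducedness via the faithful conditional expectation, both of which the paper treats as implicit.
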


\smallskip
Assume now that under the partial action  there is an open dense orbit $Y$ with trivial isotropy. Thus, for some (and then any) $y\in Y$ one has
\begin{equation*}\label{guadelupa}
\G(y):=\big\{g\in\G\,\big\vert\,y\in X_{g^{-1}}\,,\Theta_g(y)=y\big\}=\{\e\}\,.
\end{equation*}
Then $X$ will be a regular compactification of the discrete orbit $Y$ and for any $y\in Y$ the orbit map $g\to\Theta_g(y)$ will be a homeomorphism on its image. 

\smallskip
In the Hilbert space $\ell^2\big(Y\big)$ we introduce the representation by multiplication operators
\begin{equation*}\label{mexic}
\pi:C(X)\to\mathbb B\big[\ell^2(Y)\big]\,,\quad\big[\pi(a)\xi\big](y):=a(y)\xi(y)\,.
\end{equation*}
For any $h\in\G\,,\,y\in Y$ and $\xi\in\ell^2(Y)$ one defines 
\begin{equation*}\label{partilix}
\big[u_h(\xi)\big](y):=
\begin{cases}
\xi\big(\Theta_{h^{-1}}(y)\big) &{\rm if}\ \;y\in X_h\,,\\
0\, &{\rm if}\ \;y\notin X_h\,.
\end{cases}
\end{equation*}
Then $\big(\pi,u,\ell^2(Y)\big)$ {\it is a covariant representation of} $\big(C(X),\te,\G\big)$ and {\it the integrated form representation} 
$$
\Pi\equiv\pi\!\rtt u\!:C(X)\!\rtt_\te\!\G\to\mathbb B\big[\ell^2(Y)\big]
$$
acts on $\ell^1_\te\big(\G;C(X)\big)$ as $\Pi(\Phi)=\sum_{h\in\G}\pi\big[\Phi(h)\big]u_h$  and satisfies
\begin{equation}\label{lemikainnen}
\Pi\big(a\otimes\delta_h\big)=\pi(a)u_h\,,\quad\forall\,h\in\G\,,\,a\in C(X_h)\,.
\end{equation}
To express it explicitly, we need some notations. 
For $y,z\in Y$ we set $g_{yz}$ for the unique element of the group such that $\Theta_{g_{yz}}(y)=z$\,; we use the fact that $Y$ is an orbit with trivial isotropy. Note the relations
\begin{equation}\label{dominicana}
g_{yz}^{-1}=g_{zy}\,,\quad g_{zy}=g_{xy}g_{zx}\,,\quad\forall\,x,y,z\in Y.
\end{equation}
A direct computation, relying on \eqref{dominicana}, shows that on $\ell^1_\te\big(\G;C(X)\big)$  the representation $\Pi$ reads 
\begin{equation*}\label{flochax}
\big[\Pi(\Phi)\xi\big](y)=\sum_{z\in Y}\Phi\big(g_{zy},y\big)\xi(z)\,,
\end{equation*}
where we set $\big[\Phi(k)\big](y)=:\Phi(k,y)$\,.

\begin{thm}\label{pohjola}
Suppose that $\G$ is countable and rigidly symmetric and that $Y$ is a dense open orbit of the space $X$, with trivial isotropy. Let $\Phi\in\ell^1_\te\big(\G;C(X)\big)$\,. 
\begin{enumerate}
\item[(i)]
If the operator $\,\Pi(\Phi)$ is invertible, there exists $\Psi\in\ell^1_\te\big(\G;C(X)\big)$ such that $\Pi(\Phi)^{-1}\!=\Pi(\Psi)$\,.
\item[(ii)]
If $\,\Pi(\Phi)$ is Fredholm, at least one of its inverses $T$ modulo $\mathbb K\big[\ell^2(Y)\big]$ belongs to $\Pi[\ell^1_{\theta}\big(\G;C(X)\big)]$\,.
\end{enumerate}
\end{thm}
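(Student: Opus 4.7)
Both parts will be proved by invoking the general results of Subsection \ref{idealuri}: (i) from Theorem \ref{corolyx} and (ii) from Corollary \ref{honduras}. What remains is to verify that $\Pi$ is a faithful representation of $C(X)\!\rtt_\te\!\G$, and for (ii) to exhibit a $\te$-invariant ideal $\K$ of $C(X)$ with $\Pi\big(\K\!\rtt_\te\!\G\big)=\mathbb K[\ell^2(Y)]$.

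For the faithfulness of $\Pi$, the multiplication representation $\pi:C(X)\to\mathbb B[\ell^2(Y)]$ is injective because $Y$ is dense in the compact Hausdorff space $X$: a continuous function vanishing on every $y\in Y$ vanishes identically. Covariance of $(\pi,u)$ is recorded in the paragraph preceding the statement. Since a rigidly symmetric group is amenable (Remark \ref{lantors}), the full and reduced partial crossed products of $\big(C(X),\te,\G\big)$ coincide, and it suffices that $\Pi=\pi\rtt u$ be faithful on the reduced one. Fixing $y_0\in Y$, the orbit map $g\mapsto\Theta_g(y_0)$ is a bijection of $\G$ onto $Y$ (by triviality of the isotropy) and identifies $\ell^2(Y)$ with $\ell^2(\G)$; under this identification $\Pi$ is the regular representation induced from the faithful representation $\pi$, which is the partial-action analogue of the standard criterion that regular crossed-product representations induced from faithful representations of the coefficient algebra are themselves faithful. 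Part (i) is then immediate from Theorem \ref{corolyx}: if $\Pi(\Phi)$ is invertible in $\mathbb B[\ell^2(Y)]$, the inverse lies in $\Pi[\ell^1_\te(\G;C(X))]$, hence equals $\Pi(\Psi)$ for a suitable $\Psi\in\ell^1_\te(\G;C(X))$.

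For part (ii) I would take $\K:=\big\{a\in C(X)\,\big\vert\,a|_{X\setminus Y}=0\big\}\cong C_0(Y)$, which is a $\te$-invariant ideal of $C(X)$ because the orbit $Y$ is $\G$-invariant under $\Theta$. Trivial isotropy forces the orbit map to be a homeomorphism onto $Y$, so $Y$ is discrete inside $X$ and $\K\cong c_0(\G)$ with $\te$ restricting to the left-translation action; hence $\K\!\rtt_\te\!\G$ reduces to the classical global crossed product $c_0(\G)\rtimes\G\cong\mathbb K[\ell^2(\G)]\cong\mathbb K[\ell^2(Y)]$. An explicit computation using \eqref{lemikainnen} on characteristic functions $\chi_{\{y_0\}}\in\K\cap C(X_h)$ shows that the elementary generators of $\K\!\rtt_\te\!\G$ are sent by $\Pi$ to rank-one operators of the form $|\delta_{y_0}\ra\!\la\delta_{\Theta_{h^{-1}}(y_0)}|$; as $y_0$ and $h$ vary, these exhaust every rank-one operator on $\ell^2(Y)$, so $\Pi\big(\K\!\rtt_\te\!\G\big)=\mathbb K[\ell^2(Y)]$. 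Corollary \ref{honduras} then yields the Fredholm inverse closedness required by (ii).

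The main obstacle I foresee is precisely the identification $\Pi\big(\K\!\rtt_\te\!\G\big)=\mathbb K[\ell^2(Y)]$: faithfulness of $\Pi$ alone only supplies the abstract isomorphism $\K\!\rtt_\te\!\G\cong c_0(\G)\rtimes\G$, whereas Corollary \ref{honduras} requires concrete equality with the compacts, which needs the explicit computation on the dense subalgebra $\ell^1_\te(\G;\K)$ just outlined. Everything else is a straightforward application of the abstract machinery.
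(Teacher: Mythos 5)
Your overall strategy coincides with the paper's: for (i) reduce everything to the faithfulness of $\Pi$ and invoke the inverse closedness of $\ell^1_\te\big(\G;C(X)\big)$ in the partial crossed product, and for (ii) take $\K=C_0(Y)$, show $\Pi\big(\K\rtt_\te\G\big)=\mathbb K\big[\ell^2(Y)\big]$ and apply Corollary \ref{honduras}. Part (ii) is essentially correct (your rank-one computation is exactly what is needed), although the claim that $\te$ restricts to the \emph{global} left-translation action on $c_0(\G)$ is off: the orbit map $g\mapsto\Theta_g(y_0)$ is only defined on $\{g\mid y_0\in X_{g^{-1}}\}$, so it identifies $Y$ with a subset of $\G$ and the restricted system remains a genuinely partial one; this does not affect your direct computation, which is the argument that actually matters.

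The genuine gap is in the faithfulness of $\Pi$. The representation $\Pi$ on $\ell^2(Y)$ is \emph{not} the regular representation induced from the faithful representation $\pi$: that induced representation lives on (a subspace of) $\ell^2(\G)\otimes\ell^2(Y)$, whereas $\Pi$ on $\ell^2(Y)$ is the regular-type representation induced from the \emph{point evaluation} at $y_0$, a one-dimensional and hence non-faithful representation of $C(X)$. The standard criterion you invoke therefore does not apply, and without an extra hypothesis a representation of the reduced partial crossed product restricting injectively to the coefficient algebra need not be faithful. The paper closes this gap by verifying \emph{topological freeness} of the partial action --- for $g\ne\e$ the fixed-point set $F_g=\{x\in X_{g^{-1}}\mid\Theta_g(x)=x\}$ is disjoint from the dense set $Y$ (trivial isotropy), hence has empty interior --- and then appealing to the uniqueness theorem of Exel--Laca--Quigg \cite[Th.\,2.6]{ELQ}, which states that for a topologically free partial action any covariant representation that is injective on $C(X)$ integrates to a faithful representation of the (reduced $=$ full, by amenability) crossed product. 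You should replace your induced-representation argument by this one; everything else in your proposal then goes through.
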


\begin{proof}
(i) Having in view Proposition \ref{corolix}, we only need to check that the representation $\Pi$ is faithful; see also Theorem \ref{teoremix}(iv). The universal and the reduced partial crossed products here are the same, cf. Remark \ref{lantors}. Note that the restriction of $\Pi$ to $C(X)$ coincides with $\pi$ (set $h=\e$ in \eqref{lemikainnen}). By \cite[Th.\,2.6]{ELQ}, injectivity of $\Pi$ would follow if we prove that $\pi$ is injective and the action is topologically free. The injectivity of $\pi$ is obvious: $\pi(a)=0$ means that the restriction of $a$ to $Y$ is null, which implies that $a$ is null, since the orbit $Y$ was supposed dense. Topological freeness (cf. \cite[Def.\,2.1.]{ELQ}) is clear, since $Y$ is dense: having trivial isotropy, it is disjoint from any set of the form
$$
F_g:=\big\{x\in X_{g^{-1}}\,\big\vert\,\Theta_g(x)=x\big\}\,,\quad g\ne\e\,,
$$
so this one must have empty interior.

\smallskip
(ii) Since $Y$ is discrete, the elements of $C_0(Y)$ are transformed by the representation $\pi$ into compact multiplication operators in $\ell^2(Y)$\,. The $\te$-invariant ideal $C_0(Y)$ gives rise to the ideal $C_0(Y)\!\rightthreetimes_\te\!\G$ of the partial crossed product $C(X)\!\rtt_\te\!\G$ and one has  $\Pi\big[C_0(Y)\!\rtt_\te\!\G\big]=\mathbb K\big[\ell^2(Y)\big]$\,. Therefore one can apply Corollary \ref{honduras} and conclude that $\Pi[\ell^1_{\theta}\big(\G;C(X)\big)]\subset\mathbb B\big[\ell^2(Y)\big]$  is Fredholm inverse closed. Then the assertion follows easily from Atkinson's Theorem; see also Example \ref{fredholmix}.
\end{proof}

\subsection{Toeplitz-Bunce-Deddens operators}\label{dedenix}

In the Hilbert space $\ell^2(\N)$\,, with canonical orthonormal base $\{\delta_n\}_{n\in\N}$\,, each function $a\in\ell^\infty(\N)$ defines a bounded multiplication operator ${\sf M}_a$\,. Let us denote by ${\sf S}$ the right shift uniquely determined by ${\sf S}\,\delta_n\!:=\delta_{n+1}$ for every $n\in\N$\,. For $q\in\N$\,, we say that $a\in\ell^\infty(\N)$ is $q$-periodic if $a(n+q)=a(n)\,,\,\forall\,n\in\N$\,.
We denote by $\ell^\infty(\N;q)$ the $C^*$-subalgebra of all $q$-periodic functions. The starting point in constructing a Toeplitz-Bunce-Deddens algebra is an infinite family $\mathbf q:=\{q_i\!\mid\!i\in\N\}$ of strictly increasing positive integers such that each $q_i$ divides $q_{i+1}$\,, i.e. $q_{i+1}=r_i q_i$ with $r_i\ge 2$\,. 

\begin{defn}\label{buncix}
{\it The Toeplitz-Bunce-Deddens algebra $\mathfrak D(\mathbf q)$ associated to the multi-integer $\mathbf q$} is the $C^*$-algebras of operators in $\ell^2(\N)$ generated by the family
\begin{equation*}\label{generatrix}
\Big\{{\sf S}_a:={\sf S}{\sf M}_a\,\Big\vert\,a\in\bigcup_{i\in\N}\ell^\infty(\N;q_i)\Big\}\,.
\end{equation*}
\end{defn}

\begin{prop}\label{propotrix}
The Toeplitz-Bunce-Deddens algebra is topologically graded over $\Z$\,. Its $\ell^1$-Banach $^*$-algebra $\ell^1\big[\mathfrak D(\mathbf q)\big]$
is reduced, symmetric and inverse closed in $\mathfrak D(\mathbf q)$ and in $\mathbb B\big[\ell^2(\N)\big]$ (an explicit description is included in the proof).
\end{prop}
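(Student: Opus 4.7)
The strategy is to realize $\mathfrak D(\mathbf q)$ as a topologically $\mathbb Z$-graded $C^*$-algebra via a gauge-type dual action of $\mathbb T=\widehat{\mathbb Z}$, and then invoke Theorem \ref{exelix}. Since $\mathbb Z$ is abelian, it is rigidly symmetric by Remark \ref{lacantors}, so the hypotheses of that theorem are met.

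For the gauge action, let $U_z\in\mathbb B[\ell^2(\mathbb N)]$ be the diagonal unitary $U_z\delta_n:=z^n\delta_n$ and set $\alpha_z(T):=U_zTU_z^*$. A direct computation gives $\alpha_z({\sf S})=z{\sf S}$ and $\alpha_z({\sf M}_a)={\sf M}_a$ for every $a\in\ell^\infty(\mathbb N)$, so $\alpha_z({\sf S}_a)=z{\sf S}_a$; hence $\alpha_z$ leaves the $^*$-algebra generated by the ${\sf S}_a$ invariant, and therefore also $\mathfrak D(\mathbf q)$. On that dense $^*$-subalgebra the map $z\mapsto\alpha_z(T)$ is polynomial in $(z,\bar z)$, and since each $\alpha_z$ is isometric, norm continuity extends to all of $\mathfrak D(\mathbf q)$.

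The spectral subspaces admit a transparent description. An operator $T\in\mathbb B[\ell^2(\mathbb N)]$ satisfies $U_zTU_z^*=z^nT$ for every $z\in\mathbb T$ iff its matrix coefficients $\langle\delta_j,T\delta_k\rangle$ vanish unless $j-k=n$; so $\mathfrak D(\mathbf q)_n$ consists of the operators of the form ${\sf S}^n{\sf M}_d$ for $n\geq 0$ (respectively ${\sf M}_d({\sf S}^*)^{|n|}$ for $n\leq 0$), with $d$ ranging over the $C^*$-subalgebra of $\ell^\infty(\mathbb N)$ carved out of the fixed-point algebra $\mathfrak D(\mathbf q)_0$. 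The contractive projections onto the spectral subspaces are
\begin{equation*}
\widetilde P_n(T)=\int_{\mathbb T}\bar z^n\,\alpha_z(T)\,dz,
\end{equation*}
and accordingly
\begin{equation*}
\ell^1[\mathfrak D(\mathbf q)]=\Big\{T\in\mathfrak D(\mathbf q)\,\Big\vert\,\sum_{n\in\mathbb Z}\p\!\widetilde P_n(T)\!\p\,<\infty\Big\}.
\end{equation*}
Theorem \ref{exelix} then delivers that this Banach $^*$-algebra is reduced, symmetric and inverse closed in $\mathfrak D(\mathbf q)$; inverse closedness in $\mathbb B[\ell^2(\mathbb N)]$ follows from Theorem \ref{teoremix}(iii), the defining faithful representation being the inclusion $\mathfrak D(\mathbf q)\hookrightarrow\mathbb B[\ell^2(\mathbb N)]$.

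The only delicate point is checking that conjugation by $U_z$ preserves $\mathfrak D(\mathbf q)$ and defines a norm-continuous $\mathbb T$-action, which is routine from the explicit form of the generators. A fully explicit description of the fixed-point algebra $\mathfrak D(\mathbf q)_0$ (an AF-type inductive limit governed by the increasing periods $\mathbf q$) is not required for the symmetry and inverse-closedness assertions and can safely be left implicit.
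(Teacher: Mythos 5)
Your proposal is correct and follows essentially the same route as the paper: realize $\mathfrak D(\mathbf q)$ as topologically $\Z$-graded via the gauge action $\alpha_z=U_z\cdot U_z^*$ (which the paper imports from \cite{Exs1} rather than verifying by hand), identify the spectral subspaces, and conclude by Theorem \ref{exelix} together with Theorem \ref{teoremix}(iii). The only cosmetic difference is that you describe $\mathfrak D(\mathbf q)_n$ as $S^n\mathfrak D(\mathbf q)_0$ (resp.\ $\mathfrak D(\mathbf q)_0(S^*)^{|n|}$), whereas the paper exhibits spanning elements in the ordered form $S_{a_1}\cdots S_{a_n}S^*_{b_1}\cdots S^*_{b_m}$ with $n-m=k$; these descriptions agree.
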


\begin{proof}
By \cite{Exs1}, a canonical action $\alpha$ of the torus $\mathbb T=\widehat\Z$ on $\mathfrak D(\mathbf q)$ is defined as follows: For every $z\in\mathbb T$ the unitary multiplication operator determined by $U_z\delta_n:=z^n\delta_n$ ($n\in\N$) defines on $\mathbb B\big[\ell^2(\N)\big]$ the inner automorphism $T\to U_z TU_z^*$. Because of
\begin{equation*}\label{costarica}
\alpha_z(S_a)=U_z S_aU_z^*=zS_a\,,\quad\forall\,a\in\ell^\infty(\N)\,,
\end{equation*}
$\big(\mathfrak D(\mathbf q),\alpha,\T\big)$ is indeed a (full) continuous action. We also write $\ell^\infty(\N;\mathbf q)$ for the (unital, Abelian) $C^*$-subalgebra of $\ell^\infty(\N)$ generated by $\bigcup_{i\in\N}\ell^\infty(\N;q_i)$\,.  The spectral subspace $\mathfrak D(\mathbf q)_k$ is generated by operators of the ordered form
\begin{equation*}\label{paracetamol}
S_{a_1}\dots S_{a_n}S^*_{b_1}\dots S^*_{b_m}\,,\quad a_1,\dots,a_n,b_1,\dots,b_m\in\ell^\infty(\N;\mathbf q)\,,\ n-m=k\,.
\end{equation*}
This follows easily from the relations $S^*_a S_b=M_{\overline ab}$ and $M_a S_b=S_{\tilde ab}$\,, where $\tilde a_n:=a_{n-1}$ if $n\ge 1$\,. Since $\tilde a_0$ does not appear in the definition of the weighted shift operator, it can be fixed such that $\tilde a$ is periodic. Other forms of the elements in $\mathfrak D(\mathbf q)_k$ could involve the final projection $Q:=SS^*$.
These having been settled, the assertions concerning the $\ell^1$-Banach algebra follow from Theorem \ref{exelix}.
\end{proof}

\begin{rem}\label{toeplix}
In \cite{Exs1} one gets the isomorphism
\begin{equation}\label{triplix}
\mathfrak D(\mathbf q)\overset{\sim}{\longrightarrow}\big[c_0(\N)\oplus\ell^\infty(\N;\mathbf q)\big]\!\rtt_\te\Z\cong C\big[\N(\mathbf q)\big]\!\rtt_\te\Z\,.
\end{equation}
We refer to \cite{Exs1} for the Gelfand spectrum $\N(\mathbf q)=\N\sqcup\Si(\mathbf q)$ of the Abelian $\alpha$-fixed point $C^*$-algebra 
$$
\mathfrak D(\mathbf q)_0\!:=c_0(\N)\oplus\ell^\infty(\N;\mathbf q)
$$ 
(a compactification of the discrete set $\N$ by a Cantor set $\Si(\mathbf q)$) and for the interesting explicit form of the partial action $\te$ in the Gelfand realization (induced by a partial homeomorphism of $\N(\mathbf q)$). 
\end{rem}

We may also state that {\it $\ell^1\big[\mathfrak D(\mathbf q)\big]=\bigoplus_{k\in\Z}^{1,\alpha}\mathfrak D(\mathbf q)_k$ is Fredholm inverse closed in} $\mathfrak D(\mathbf q)\subset\mathbb B\big[\ell^2(\N)\big]$\,. This follows from Corollary \ref{honduras}, where $\K=c_0(\N)$ and $\Pi$ is the inverse of the isomorphism appearing in \eqref{triplix}, since $\Pi\big[c_0(\N)\!\rtt_\te\!\Z\big]=\mathbb K\big[\ell^2(\N)\big]$\,. One calls the quotient
\begin{equation*}\label{triflix}
\mathfrak D(\mathbf q)/\mathbb K\big[\ell^2(\N)\big]\cong\ell^\infty(\N;\mathbf q)\!\rtimes_\te\Z\cong C\big[\Si(\mathbf q)\big]\!\rtimes_\te\Z
\end{equation*}
{\it the Bunce-Deddens algebra}. The action "at infinity" of $\Z$ on the Cantor set $\Si(\mathbf q)$ is induced by an odometer map. It is a global action, so this quotient can be dealt with by usual crossed products.

\subsection{UHF algebras}\label{afelliz}

{\it An UHF-algebra} is the inductive limit  
\begin{equation}\label{limindiz}
\mathfrak C:=\underset{m\in\N}{\rm lim\,ind}\,\mathbf M^{p_m}
\end{equation} 
of a sequence of full matricial $C^*$-algebras, with unital and injective connecting morphisms, where each $p_m$ divides $p_{m+1}$. In \cite{Exs2} Exel treated general approximately finite algebras. He defined on $\mathfrak C$ a regular and semi-saturated $\mathbb T$-action and then he applied his theory from \cite{Exs} to get an isomorphism between $\mathfrak C$ and a partial crossed product $\A\rtt_\te\Z$\,, where $\A$ is an Abelian almost finite $AF$-algebra. To simplify, we decided to treat only UHF algebras; the general case is similar, but it would involve more complicated notations. To state a result on symmetric subalgebras, we make use of Theorem \ref{exelix}, only invoking the circle action and its spectral subspaces; the partial crossed product will not be mentioned.

\smallskip
On every full matrix algebra $\mathbf M^p$ one defines 
\begin{equation*}\label{inerix}
\alpha^p:\mathbb T\to{\rm Aut}\big(\mathbf M^p\big)\,,\quad\alpha^p_z\big[(c_{ij})_{i,j}\big]=\big(z^{i-j}c_{ij}\big)_{i,j}\,.
\end{equation*}
The connecting morphisms $\big\{\mu^{m}\!:\!\mathbf M^{p_m}\to\mathbf M^{p_{m+1}}\big\}$ are covariant with respect to the actions $\big\{\alpha^{p_m}\big\}$. The inductive limit comes with the canonical monomorphisms $\big\{\nu^{m}\!:\mathbf M^{p_m}\!\to\mathfrak C\big\}$ and $\bigcup_m\nu^{(m)}\!\big(\mathbf M^{p_m}\big)$ is dense in $\mathfrak C$\,. For every $z\in\mathbb T\,,m\in\N$ and $\Phi\in\mathbf M^{p_m}$ we set
\begin{equation*}\label{tetaniz}
\alpha_z\big[\nu^{m}(\Phi)\big]:=\nu^{m}\big[\alpha^{p_m}_z(\Phi)\big]\,.
\end{equation*}
It is shown in \cite[Sect.\,2]{Exs2} that $\alpha$ extends to an action on $\mathfrak C$ (which is semi-saturated and regular).

\smallskip
To make Theorem \ref{exelix} concrete, one needs the spectral subspaces. For each $|k|\le p$\,, let us denote by $\mathbf M^p_k$ the vector subspace of $p\!\times\!p$-matrices with non-empty entries only on the $k'$th diagonal ($c_{ij}=0$ if $i-j\ne k$). If $|k|>p$\,, we set simply $\mathbf M^p_k\!:=\{0\}$\,. Then $\mathbf M^p_k$ is the $k'$th spectral subspace of the action $\alpha^p$. It is follows from the definitions and from the fact that each canonical morphism $\nu^{m}$ is injective that the spectral subspaces of the action $\alpha$ are the closures in $\mathfrak C$ of the unions over $m$ of the images through $\nu^{m}$ of the subspaces $\mathbf M^{p_m}_k$.

\begin{thm}\label{cioflix}
The Banach $^*$-algebra
\begin{equation*}\label{perru}
\ell^1(\mathfrak C)=\bigoplus^{1,\alpha}_{k\in\N}\overline{\bigcup_{m\in\Z}\nu^{m}\big(\mathbf M^{p_m}_k\big)}
\end{equation*}
is reduced, symmetric and inverse closed in $\mathfrak C$\,.
\end{thm}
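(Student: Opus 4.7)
The plan is to reduce the statement to Theorem \ref{exelix} applied to the circle action $\alpha$ of $\T=\widehat\Z$ on the unital $C^*$-algebra $\mathfrak C$. The group $\Z$ is abelian and hence rigidly symmetric by Remark \ref{lacantors}, and the continuity of $\alpha$ as a $\T$-action on $\mathfrak C$ was recorded from \cite{Exs2}. The only piece of work, therefore, is to identify the $k$-th spectral subspace $\mathfrak C_k=\{\Phi\in\mathfrak C\mid\alpha_z(\Phi)=z^k\Phi,\,\forall\,z\in\T\}$ with the closure $\overline{\bigcup_{m}\nu^{m}(\mathbf M^{p_m}_k)}$; once this is done, the conclusion that $\ell^1(\mathfrak C)$ is reduced, symmetric and inverse closed in $\mathfrak C$ is immediate from Theorem \ref{exelix}.

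The inclusion $\nu^{m}(\mathbf M^{p_m}_k)\subset\mathfrak C_k$ follows at once from the equivariance $\alpha_z\circ\nu^{m}=\nu^{m}\circ\alpha^{p_m}_z$ together with the definition of $\mathbf M^{p_m}_k$: if $\Phi\in\mathbf M^{p_m}_k$ then $\alpha_z(\nu^{m}(\Phi))=\nu^{m}(z^k\Phi)=z^k\nu^{m}(\Phi)$. For the reverse inclusion, I would exploit the Fourier-type projection
\begin{equation*}
\widetilde P_k(\Psi)=\int_\T\overline{z^k}\,\alpha_z(\Psi)\,dz,\qquad\Psi\in\mathfrak C,
\end{equation*}
which is a contractive linear map of $\mathfrak C$ onto $\mathfrak C_k$. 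Since each $\nu^{m}$ is linear, continuous and intertwines the actions, one obtains
\begin{equation*}
\widetilde P_k\bigl(\nu^{m}(\Psi)\bigr)=\nu^{m}\!\Big(\!\int_\T\overline{z^k}\,\alpha^{p_m}_z(\Psi)\,dz\Big)=\nu^{m}\bigl(\widetilde P^{p_m}_k(\Psi)\bigr)\in\nu^{m}(\mathbf M^{p_m}_k),\qquad\Psi\in\mathbf M^{p_m},
\end{equation*}
where $\widetilde P^{p_m}_k$ denotes the analogous projection associated to the matricial action $\alpha^{p_m}$.

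To finish the density, I would start from an arbitrary $\Phi\in\mathfrak C_k$, use density of $\bigcup_{m}\nu^{m}(\mathbf M^{p_m})$ in $\mathfrak C$ to choose a sequence $\nu^{m_j}(\Psi_j)\to\Phi$, and then apply the contractive $\widetilde P_k$. Since $\widetilde P_k(\Phi)=\Phi$, the sequence $\nu^{m_j}\!\bigl(\widetilde P^{p_{m_j}}_k(\Psi_j)\bigr)$ lies in $\nu^{m_j}(\mathbf M^{p_{m_j}}_k)$ and still converges to $\Phi$, proving the desired density. The main obstacle is precisely this equivariant bookkeeping; once the spectral subspaces are identified, Theorem \ref{exelix} yields all three properties (reducedness, symmetry, inverse closedness) without further work.
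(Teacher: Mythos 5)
Your proposal is correct and follows essentially the same route as the paper: the paper also reduces Theorem \ref{cioflix} to Theorem \ref{exelix} by identifying the spectral subspaces of the circle action $\alpha$ with the closures $\overline{\bigcup_{m}\nu^{m}\big(\mathbf M^{p_m}_k\big)}$, using the covariance of the connecting maps. Your use of the projections $\widetilde P_k$ to establish the reverse inclusion simply makes explicit the density argument that the paper leaves as an assertion.
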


\begin{rem}\label{takesix}
In \cite{Ta} Takesaki (see also \cite[VIII.17]{FD2}) provided an approach to UHF algebras which would lead to a different grading, and consequently also to a different symmetric $\ell^1$-type Banach $^*$-algebra, this time over the infinite restricted product $\G:=\prod^\prime_{m\in\N}\G_m$ with the discrete topology, where each $\G_m$ is a cyclic group of a certain order $q_m$\,. Actually, the UHF algebra $\mathfrak C$ is isomorphic to an infinite tensor product $\otimes_m\mathbf M^{q_m}$, where $q_m:=p_{m+1}/p_m$\,. In its turn, this one is shown to be isomorphic to the full crossed product $C(\widehat\G\,)\!\rtimes\G$\,. The dual $\widehat\G$ can be identified with the product group $\prod_{m\in\N}\G_m$\,. 
\end{rem}

\subsection{CAR algebras}\label{afellix}

Let $\big(\mathcal R,(\cdot|\cdot)\big)$ be an infinitely dimensional separable Hilbert space and $a:\mathcal R\to\mathbb B(\mathcal H)$ {\it a representation of the canonical anticommutation relations} in the Hilbert space $\h$\,, generating the unital $C^*$-algebra ${\rm CCR}(\mathcal R)\subset\mathbb B(\mathcal H)$\,. Thus $a$ is lineal and for every $r,s\in\mathcal R$ one has
\begin{equation*}\label{anticom}
a(r)a(s)+a(s)a(r)=0\,,\quad a^*(r)a(s)+a^*(s)a(r)=(r|s)\,.
\end{equation*}
It is known that ${\rm CCR}(\mathcal R)$ is isomorphic to the UHF algebra $\mathbf M(2^\infty):=\underset{m\in\N}{\rm lim\,ind}\,\mathbf M^{2^m}$ as well as with the infinite tensor product $\bigotimes_{m\in\N}\mathbf M^2$.

\smallskip
Let also $V:\wG\to\mathbb B(\mathcal R)$ be a strongly continuous unitary representation of the compact Abelian group $\wG$\,. Associated to $V$ there is a continuous action $\upsilon:\wG\to{\rm Aut}\big[{\rm CCR}(\mathcal R)\big]$\,, uniquely defined on generators by
\begin{equation*}\label{creatrix}
\upsilon_\chi\big[a(r)\big]=a\big[V_\chi(r)\big]\,,\quad\forall\,r\in\mathcal R\,,\,\chi\in\wG\,.
\end{equation*}

We denote by $\G$ the dual of $\wG$\,; it is a discrete Abelian group. We already have the grading 
\begin{equation*}\label{carnatix}
{\rm CCR}(\mathcal R)=\widetilde\bigoplus_{g\in\G}{\rm CCR}(\mathcal R)^{\upsilon}_g
\end{equation*}
in terms of spectral subspaces of the action $\upsilon$\,. The next corollary follows directly from Theorem \ref{teoremix}. Note the generality: any representation $\big(\wG,V\big)$ provides a result.

\begin{cor}\label{spermioni}
The corresponding $\ell^1$-algebra 
\begin{equation*}\label{sperioni}
\ell^1\big({\rm CCR}(\mathcal R)\big)=\bigoplus_{g\in\G}^{1,\upsilon}{\rm CCR}(\mathcal R)^{\upsilon}_g
\end{equation*}
is symmetric and inverse closed in ${\rm CCR}(\mathcal R)$ and in $\mathbb B(\mathcal H)$\,. 
\end{cor}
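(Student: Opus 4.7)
The proof plan is to verify that the setup of the corollary fits directly into the framework of Theorem \ref{exelix} (or equivalently Theorem \ref{teoremix} specialized to the dual action case from Subsection \ref{cerculix}), and then read off the conclusions.

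First, I would check the structural hypotheses. The group $\wG$ is compact Abelian by assumption, so its Pontryagin dual $\G$ is a discrete Abelian group. By Remark \ref{lacantors}(a), every discrete Abelian group is rigidly symmetric, so the hypothesis of Theorem \ref{exelix} on $\G$ is automatic. The $C^*$-algebra ${\rm CCR}(\mathcal R)$ is unital by construction, and the action $\upsilon:\wG\to\mathrm{Aut}[{\rm CCR}(\mathcal R)]$ is continuous—this is indicated in the discussion just before the corollary, where $\upsilon_\chi$ is defined on generators via $V_\chi$ and extended to the whole $C^*$-algebra; strong continuity of $V$ together with the fact that the action on generators is pointwise norm-continuous (since $\|a(V_\chi r)-a(r)\|\le \|V_\chi r - r\|$) yields pointwise norm-continuity on ${\rm CCR}(\mathcal R)$ via an $\varepsilon/3$ argument on elements of the $^*$-algebra generated by $\{a(r)\}_{r\in\mathcal R}$.

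Next, the grading. The paragraph preceding the statement already records the spectral decomposition ${\rm CCR}(\mathcal R)=\widetilde\bigoplus_{g\in\G}{\rm CCR}(\mathcal R)^{\upsilon}_g$ coming from the dual action machinery of Subsection \ref{cerculix}. So the topological $\G$-grading needed by Theorem \ref{exelix} is already in place, and the contractive projections onto spectral subspaces are given by the standard Fourier-type formula
\[
\widetilde P_g(\Phi)=\int_{\wG}\overline{\chi(g)}\,\upsilon_\chi(\Phi)\,d\chi.
\]

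Applying Theorem \ref{exelix} directly then yields that $\ell^1\big({\rm CCR}(\mathcal R)\big)=\bigoplus_{g\in\G}^{1,\upsilon}{\rm CCR}(\mathcal R)^\upsilon_g$ is a reduced, symmetric Banach $^*$-algebra and that it is inverse closed in ${\rm CCR}(\mathcal R)$. For the second inverse-closedness claim, I would invoke Theorem \ref{teoremix}(iii): since ${\rm CCR}(\mathcal R)$ is defined as a concrete $C^*$-subalgebra of $\mathbb B(\mathcal H)$ via a faithful representation (namely the defining one), and any $C^*$-algebra is inverse closed in a larger $C^*$-algebra, inverse closedness of $\ell^1\big({\rm CCR}(\mathcal R)\big)$ in ${\rm CCR}(\mathcal R)$ transfers to inverse closedness in $\mathbb B(\mathcal H)$.

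There is essentially no obstacle: the whole content of the corollary is that the general machinery applies. The only mildly delicate point is confirming that $\upsilon$ is genuinely a continuous action of $\wG$ on the full $C^*$-algebra ${\rm CCR}(\mathcal R)$ (not only on the dense $^*$-subalgebra of polynomials in the $a(r)$'s), but this is a standard extension argument using the unitarity of $V_\chi$ and the $C^*$-norm estimate on creation/annihilation operators.
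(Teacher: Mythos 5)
Your proposal is correct and follows exactly the route the paper takes: the paper simply observes that the corollary ``follows directly from Theorem \ref{teoremix}'' (equivalently, Theorem \ref{exelix} in the dual-action formulation), given the spectral-subspace grading recorded just before the statement. The extra details you supply --- rigid symmetry of discrete Abelian groups, the norm estimate $\|a(V_\chi r)-a(r)\|\le\|V_\chi r-r\|$ giving continuity of $\upsilon$, and the passage from inverse closedness in ${\rm CCR}(\mathcal R)$ to $\mathbb B(\mathcal H)$ --- are precisely the checks the paper leaves implicit.
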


It is clear that $a(r)\in{\rm CCR}(\mathcal R)^{\upsilon}_g$ if and only if $r\in\mathcal R^V_g$, meaning by definition that $V_\chi(r)=\chi(g)r$ for all $\chi\in\wG$\,. Similarly, $a^*(r)\in{\rm CCR}(\mathcal R)^{\upsilon}_g$ if and only if $r\in\mathcal R^V_{g^{-1}}$\,.

\begin{ex}\label{sfarnak}
If $V:\T\to\mathbb B(\mathcal R)$ is given by $V_\tau(r):=e^{2\pi i\tau}r$ and the group duality is implemented by $\tau(k):=e^{2\pi ik\tau}$ ($k\in\Z$)\,, then $\mathcal R^V_1=\mathcal R$\,, while $\mathcal R^V_k=\{0\}$ for any $k\ne 1$\,. Thus $a(r)\in{\rm CCR}(\mathcal R)^{\upsilon}_1$ and $a^*(r)\in{\rm CCR}(\mathcal R)^{\upsilon}_{-1}$\,, from which it follows that
\begin{equation*}\label{followix}
a^*(r_1)\dots a^*(r_n)a(s_1)\dots a(s_m)\in{\rm CCR}(\mathcal R)^{\upsilon}_{m-n}\,,\quad\forall\,r_1,\dots,r_n,s_1,\dots s_m\in\mathcal R\,.
\end{equation*}
\end{ex}

\subsection{Wiener-Hopf algebras associated to quasi-lattice ordered groups}\label{quasilatix}

One can treat this subject by using partial crossed products, as in Subsection \ref{cerculix}, due to results from \cite{QR,ELQ,Ex2}. We found it easier for our purposes to use the initial article \cite{Ni}, combined with Theorem \ref{teoremix}.

\smallskip
We fix a sub-monoid $\pe$ of a discrete amenable group $\G$ such that $\pe\cap\pe^{-1}=\{\e\}$\,. One defines a left-invariant order relation in $\G$ by $g\leq h$ iff $g^{-1}h\in \pe$.
The ordered group $(\G,\pe)$ is {\it quasi-lattice ordered} if any $g\in\pe\pe^{-1}$ has a least upper bound in $\pe$.
Many examples may be found at \cite[pag.23]{Ni}.

\smallskip
Consider the Hilbert space $\ell^2(\pe)$ with its usual orthonormal basis $\{e_q\}_{q\in\pe}$. For $p\in\pe$, consider the bounded linear operator $W_p\in \mathbb{B}(\ell^2(\pe))$ defined by
\begin{equation*}
W_p(e_q)=e_{pq}\,,\quad\forall\,q\in\pe.
\end{equation*}
 We refer to $W$ as the {\it regular semigroup of isometries} of  $\pe$.    
The $C^*$-algebra of operators on $\ell^2(\pe)$ generated by the range of $W$ is called the {\it Wiener-Hopf algebra of $\,\pe$} and denoted by $\mathfrak{W}(\pe)$\,.

\smallskip
For every $g\in\pe\pe^{-1}$ we define the closed subspace
\begin{equation*}
\mathfrak{W}(\pe)_g:=\overline{\textrm{span}}\,\big\{W_p W_q^*\,\big\vert\,pq^{-1}\!=g\big\}
\end{equation*}
and set $\mathfrak{W}(\pe)_g=\{0\}$ if $g\notin \pe\pe^{-1}$.
A characterization of the Abelian $C^*$-subalgebra $\mathfrak{W}(\pe)_\e$ is 
\begin{equation*}
\mathfrak{W}(\pe)_\e =\left\{T\in\mathfrak{W}(\pe)\!\mid\! T \textrm{ has a diagonal matrix relatively to the canonical basis of }\ell^2(\pe)\right\}.
\end{equation*}
In Section 3 of Nica's paper \cite{Ni} it is shown that that {\it the collection $\big\{\mathfrak{W}(\pe)_g\,\big\vert\,g\in\G\big\}$ provides a topologically graded structure of the Wiener-Hopf algebra}. We can apply now Theorem \ref{teoremix} and get

\begin{cor}\label{mimishor}
The Banach $^*$-algebra $\ell^1\Big( \widehat{\bigoplus}_{g\in\G}\,\mathfrak{W}(\pe)_g \Big)$ is symmetric and inverse closed in $\mathbb B\big[\ell^2(\pe)\big]$\,.
\end{cor}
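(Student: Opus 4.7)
The plan is to apply Theorem \ref{teoremix} verbatim; the entire content of the corollary is that the hypotheses of that theorem are satisfied in the Wiener-Hopf setting, and the only external ingredient is the topological grading, which is taken from Nica's paper.

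First, I would make explicit the standing hypothesis: in addition to the amenability of $\G$ already assumed in this subsection, we must take $\G$ to be rigidly symmetric (this is the running assumption of the abstract theory and is needed for the conclusion). Given this, the starting point is the decomposition $\mathfrak{W}(\pe)=\widetilde{\bigoplus}_{g\in\G}\mathfrak{W}(\pe)_g$ recalled just above the statement. By the results of Section 3 of \cite{Ni}, the family $\{\mathfrak{W}(\pe)_g\mid g\in\G\}$ satisfies the conditions of Definition \ref{claro}: the algebraic direct sum is dense, the fibre multiplication and the involution behave correctly, and the projection onto the diagonal subalgebra $\mathfrak{W}(\pe)_\e$ (characterized as the operators in $\mathfrak{W}(\pe)$ with diagonal matrix in the canonical basis of $\ell^2(\pe)$) extends to a bounded conditional expectation. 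Hence $\mathfrak{W}(\pe)$ is a topologically $\G$-graded $C^*$-algebra.

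Next, I would invoke Theorem \ref{teoremix}(i) to conclude that the associated algebra $\ell^1\big(\widetilde{\bigoplus}_{g\in\G}\mathfrak{W}(\pe)_g\big)$ is a symmetric Banach $^*$-algebra. Finally, for the inverse closedness in $\mathbb{B}[\ell^2(\pe)]$, I would observe that the defining inclusion $\Pi:\mathfrak{W}(\pe)\hookrightarrow\mathbb{B}[\ell^2(\pe)]$ is by construction a faithful $^*$-representation (indeed, $\mathfrak{W}(\pe)$ is defined as a concrete $C^*$-subalgebra of $\mathbb{B}[\ell^2(\pe)]$). Theorem \ref{teoremix}(iii) then immediately gives that $\Pi\bigl[\ell^1(\mathfrak{W}(\pe))\bigr]$ is inverse closed in $\mathbb{B}[\ell^2(\pe)]$, which is exactly the claim.

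There is no genuine obstacle: once the topological grading of $\mathfrak{W}(\pe)$ is borrowed from Nica and once the rigidity of $\G$ is in force, the corollary is a routine specialization of the general machinery. The only point worth double-checking is that the defining representation on $\ell^2(\pe)$ is indeed faithful on $\mathfrak{W}(\pe)$ (trivial from the definition) and that the amenability assumption suffices to identify $\mathfrak{W}(\pe)$ with the enveloping $C^*$-algebra of $\ell^1(\mathfrak{W}(\pe))$ used implicitly in Theorem \ref{teoremix}(ii), as discussed in Remark \ref{lantors}.
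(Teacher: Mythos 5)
Your proposal is correct and follows exactly the paper's route: the paper simply cites Section 3 of Nica's article for the topological grading of $\mathfrak{W}(\pe)$ and then applies Theorem \ref{teoremix}, with part (iii) and the (tautologically faithful) defining representation on $\ell^2(\pe)$ giving inverse closedness in $\mathbb B\big[\ell^2(\pe)\big]$. Your explicit flagging of the standing rigid-symmetry hypothesis on $\G$ (left implicit in the paper's subsection, which only mentions amenability) is a reasonable clarification, not a deviation.
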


\begin{rem}\label{spresfarsit}
Suppose that $\pe$ is finitely generated (a slightly more general assumption is possible). Then $\ell^1\Big( \widehat{\bigoplus}_{g\in\G}\,\mathfrak{W}(\pe)_g \Big)$ {\it is also Fredholm inverse closed}. We only sketch a proof. As in \cite{QR,ELQ,Ex2}, one has an isomorphism $\mathfrak W(\pe)\cong\mathfrak W(\pe)_\e\rtt_\te\G$ for a certain partial action $\te$ of $\G$ on the diagonal Abelian algebra. This one is induced from a (Bernoulli-type) partial topological action $\Theta$ on its Gelfand spectrum $\Omega_\e$\,, which is a compactification of the monoid $\pe$. This compactification is regular ($\pe$ is an {\it open} dense orbit) under the stated hypothesis on $\pe$. It is shown in \cite[6.3]{Ni} that this regularity is equivalent with the fact that the ideal of all compact operators, identified with $C(\pe)\rtt_\te\G$\,, is contained in $\mathfrak W(\pe)\,.$ Then the result follows easily from our Corollary \ref{honduras}.
\end{rem}

\subsection{Higher rank graph algebras}\label{oroflix}

In this subsection we are going to use constructions from \cite{KP} and \cite{Ra}.

\begin{defn}\label{hix}
{\it A ${\rm k}$-graph} ({\it higher rank graph}) is a countable category $\Lambda$ endowed with a functor ${\rm d}:\Lambda\to\N^{\rm k}$ ({\it the degree functor}) satisfying the {\it factorization property}: For every $\lambda\in\Lambda$ such that ${\rm d}(\lambda)=\mathfrak m+\mathfrak n$\,, there exist unique elements $\mu,\nu\in\Lambda$ such that ${\rm d}(\mu)=\mathfrak m$\,, ${\rm d}(\nu)=\mathfrak n$ and $\lambda=\mu\nu$\,.
\end{defn}

If ${\rm k}=1$\,, one identifies a $1$-graph with the path category $E^*\!:=\bigsqcup_{n\in\N}E^n$ of a directed graph $\big(s,r:E^1\to E^0\big)$ and ${\rm d}(\lambda)$ is the usual length of a path $\lambda:=e_1\dots e_{{\rm d}(\lambda)}$\,. Accordingly, for the ${\rm k}$-graph $\big({\rm d}:\Lambda\to\N^{\rm k}\big)$\,, one denotes by ${\rm s},{\rm r}:\Lambda\to\Lambda^0\equiv{\rm Obj}(\Lambda)$ the source and the range map in the category. One has the decomposition 
$$
\Lambda=\bigsqcup_{\mathfrak n\in\N^{\rm k}}\Lambda^{\mathfrak n}\equiv\bigsqcup_{\mathfrak n\in\N^{\rm k}}{\rm d}^{-1}(\mathfrak n)\,.
$$
For elements from the object set $\Lambda^0$ (called {\it vertices}) one prefers notations as $v,w$\,. We also set set $\Lambda^{\mathfrak n}(v):=\Lambda^{\mathfrak n}\cap{\rm r}^{-1}(v)$ ("paths" of "length" $\mathfrak n\in\N^{\rm k}$ ending in $v\in\Lambda^0$\,). 

\begin{defn}\label{admisibilix}
One says that the ${\rm k}$-graph is {\it admissible} if every $\Lambda^{\mathfrak n}(v)$ is finite (raw-finite ${\rm k}$-graph) and non-void (no sources) and $\Lambda^0$ is finite (insuring that the $C^*$-algebra below is unital, with unit $\sum_{v\in\Lambda^0}{\sf S}_v$).
\end{defn}

\begin{defn}\label{cstarix}
Let $\Lambda\overset{{\rm d}}{\longrightarrow}\N^{\rm k}$ be an admissible ${\rm k}$-graph. {\it A Cuntz-Krieger $\Lambda$-family} is a set $\{{\sf T}_\lambda\!\mid\!\lambda\in\Lambda\}$ of partial isometries in a $C^*$-algebra $\mathfrak D$ such that
\begin{enumerate}
\item[(a)]
the elements $\big\{{\sf T}_v\!\mid\!v\in\Lambda^0\big\}$ are mutually orthogonal projections,
\item[(b)]
if ${\rm s}(\lambda)={\rm r}(\mu)$\,, then ${\sf T}_\lambda{\sf T}_\mu={\sf T}_{\lambda\mu}$\,,
\item[(c)]
${\sf T}^*_\lambda{\sf T}_\lambda={\sf T}_{{\rm s}(\lambda)}$ for every $\lambda\in\Lambda$\,,
\item[(d)]
${\sf T}_v=\sum_{\lambda\in\Lambda^{\mathfrak n}(v)}{\sf T}_\lambda{\sf T}^*_\lambda$ for every $v\in\Lambda^0$ and $\mathfrak n\in\N^{\rm k}$.
\end{enumerate}
We define $C^*(\Lambda,{\rm d})\equiv\mathfrak C(\Lambda)$ to be the universal $C^*$-algebra generated by a Cuntz-Krieger $\Lambda$-family $\{{\sf S}_\lambda\!\mid\!\lambda\in\Lambda\}$\,. Universality means that for every Cuntz-Krieger $\Lambda$-family $\{{\sf T}_\lambda\!\mid\!\lambda\in\Lambda\}\subset\mathfrak D$\,, there exists a unique $C^*$-morphism $\pi_{\sf T}:\mathfrak C(\Lambda)\to\mathfrak D$ such that $\pi_{\sf T}({\sf S}_\lambda)={\sf T}_\lambda$ for every $\lambda$\,.
\end{defn}

We identify now families of symmetric Banach $^*$-algebras, starting with functors ${\sf F}:\Lambda\to\G$\,, where $\G$ is supposed to be a discrete Abelian group. One particular case is $\G=\N^{\rm k}$ and ${\sf F}={\rm d}$\,, but many others are possible. For every $g\in\G$ one sets
\begin{equation*}\label{bebex}
\mathfrak C(\Lambda)_g:=\overline{\rm span}\big\{{\sf S}_\lambda{\sf S}^*_\mu\,{\big\vert\,\sf F}(\lambda){\sf F}(\mu)^{-1}=g\big\}\,.
\end{equation*}
It turns out that these are spectral subspaces of a dual action $\alpha^{\sf F}:\wG\to{\rm Aut}\big[\mathfrak C(\Lambda)\big]$\,,
uniquely determined (use the universal property of $\mathfrak C(\Lambda)$) by
$$
\alpha^{\sf F}_\chi({\sf S}_\lambda)=\chi\big[{\sf F}(\lambda)\big]{\sf S}_\lambda\,,\quad\forall\,\lambda\in\Lambda\,,\,\chi\in\wG\,.
$$

Applying Theorem \ref{exelix} one gets

\begin{cor}\label{ix}
Let $\Lambda\overset{{\rm d}}{\longrightarrow}\N^{\rm k}$ be an admissible ${\rm k}$-graph and ${\sf F}:\Lambda\to\G$ a functor with values in an Abelian discrete group. Then
\begin{equation*}\label{peru}
\ell^1\big(\mathfrak C(\Lambda)\big):=\bigoplus_{g\in\G}^{1,\alpha^{\sf F}}\,\overline{\rm span}\big\{{\sf S}_\lambda{\sf S}^*_\mu\,{\big\vert\,\sf F}(\lambda){\sf F}(\mu)^{-1}=g\big\}
\end{equation*}
is a reduced and symmetric Banach $^*$-algebra, which is inverse closed in $\mathfrak C(\Lambda)$\,.
\end{cor}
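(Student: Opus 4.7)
The plan is to realize this corollary as a direct application of Theorem \ref{exelix}. Namely, I would first upgrade the functor ${\sf F}:\Lambda\to\G$ to a genuine continuous action $\alpha^{\sf F}:\wG\to{\rm Aut}\bigl[\mathfrak C(\Lambda)\bigr]$, then identify the closed spans in the statement with the spectral subspaces of that action, and finally invoke Theorem \ref{exelix}. Since $\G$ is assumed Abelian, rigid symmetry is automatic by Remark \ref{lacantors}(a), so the hypothesis of Theorem \ref{exelix} is satisfied for free.

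For the existence of $\alpha^{\sf F}$, I fix $\chi\in\wG$ and consider the twisted family $\bigl\{\chi[{\sf F}(\lambda)]\,{\sf S}_\lambda\bigr\}_{\lambda\in\Lambda}$ in $\mathfrak C(\Lambda)$. Verifying the Cuntz-Krieger relations (a)--(d) of Definition \ref{cstarix} on this family is routine: the projections in (a) are unchanged (the scalar is one on vertices, as ${\sf F}$ sends identities to the neutral element of $\G$), the functoriality of ${\sf F}$ makes the phases multiply correctly in (b), and the scalars cancel against their conjugates in (c) and (d). Universality then yields a unique $^*$-homomorphism $\alpha^{\sf F}_\chi$; since $\alpha^{\sf F}_{\chi^{-1}}$ is a two-sided inverse, each $\alpha^{\sf F}_\chi$ is a $^*$-automorphism. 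Strong continuity of $\chi\mapsto\alpha^{\sf F}_\chi$ on generators is obvious and extends to all of $\mathfrak C(\Lambda)$ by the usual density argument (finite products of generators and their adjoints are dense and uniformly bounded under the action).

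For the spectral subspaces, a direct computation gives
\[
\alpha^{\sf F}_\chi\bigl({\sf S}_\lambda{\sf S}_\mu^*\bigr)\,=\,\chi\bigl({\sf F}(\lambda){\sf F}(\mu)^{-1}\bigr)\,{\sf S}_\lambda{\sf S}_\mu^*,
\]
so the subspace $\mathfrak C(\Lambda)_g$ defined in the statement is contained in the $g$-spectral subspace of $\alpha^{\sf F}$. Equality follows from the standard fact (a consequence of the factorization property and relations (b)--(d)) that finite sums of products of the form ${\sf S}_\lambda{\sf S}_\mu^*$ are dense in $\mathfrak C(\Lambda)$, combined with the averaging projection $\Phi\mapsto\int_{\wG}\overline{\chi(g)}\,\alpha^{\sf F}_\chi(\Phi)\,d\chi$ landing precisely in the $g$-spectral subspace. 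With the topological grading thereby established, Theorem \ref{exelix} applies verbatim. I do not foresee a serious obstacle: the only mildly technical point is the verification that $\alpha^{\sf F}$ is well defined and continuous, which reduces to the universal property of $\mathfrak C(\Lambda)$ plus the density of finite products of generators.
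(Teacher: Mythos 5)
Your proposal is correct and follows essentially the same route as the paper: the closed spans are recognized as the spectral subspaces of the dual action $\alpha^{\sf F}$ obtained from the universal property of $\mathfrak C(\Lambda)$ (the paper asserts this with even less detail than you supply), and then Theorem \ref{exelix} is applied. Your verification of the Cuntz--Krieger relations for the twisted family, the strong continuity, and the identification of the spectral subspaces via the averaging projection are exactly the details the paper leaves implicit.
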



\begin{defn}\label{aperiodix}
On $\N^{\sf k}$ one considers the order $\mathfrak m\le\mathfrak n\,\Leftrightarrow\,\mathfrak m_i\le\mathfrak n_i\,,\forall\,i$\,. Let $\Lambda\overset{{\rm d}}{\longrightarrow}\N^{\rm k}$ be an admissible ${\rm k}$-graph.
\begin{enumerate}
\item[(a)]
We say that $\rho$ is {\it a minimal common extension} of $\mu,\nu\in\Lambda$\,, and we write $\rho\in{\rm MCE}(\mu,\nu)$\,, if ${\rm d}(\rho)=\max\{{\rm d}(\mu),{\rm d}(\nu)\}$ and $\rho=\mu\mu'=\nu\nu'$ for some $\mu',\nu'\in\Lambda$\,.
\item[(b)]
The ${\sf k}$-graph is called {\it aperiodic} if for every $\mu,\nu\in\Lambda$ with ${\rm s}(\mu)={\rm s}(\nu)$ there exists $\lambda\in\Lambda$ with ${\rm r}(\lambda)={\rm s}(\mu)$ and ${\rm MCE}(\mu\lambda,\nu\lambda)=\emptyset$\,.
\end{enumerate}
\end{defn}

The two notions are easier to visualize for ${\sf k}=1$\,. In this case, for instance, ${\rm MCE}(\mu,\nu)$ is either void, or a singleton (one of the two elements) and aperiodicity boils down to the well-known condition (L).

\begin{prop}\label{perdrix}
Let $\Lambda\overset{{\rm d}}{\longrightarrow}\N^{\rm k}$ be an admissible and aperiodic ${\rm k}$-graph and ${\sf F}:\Lambda\to\G$ a functor with values in an Abelian discrete group. Let $\{{\sf T}_\lambda\!\mid\!\lambda\in\Lambda\}$ be a Cuntz-Krieger $\Lambda$-family of partial isometries in the Hilbert space $\h$ such that ${\sf T}_v\ne 0$ for every $v\in\Lambda^0$. Then (the quite obvious action $\beta^{\sf F}$ is explained in the proof)
\begin{equation}\label{fujimori}
\bigoplus_{g\in\G}^{1,\beta^{\sf F}}\,\overline{\rm span}\big\{{\sf T}_\lambda{\sf T}^*_\mu\,{\big\vert\,\sf F}(\lambda){\sf F}(\mu)^{-1}\!=g\big\}
\end{equation}
is inverse closed in $\mathbb B(\h)$\,.
\end{prop}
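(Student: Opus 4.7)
The strategy is to reduce to Corollary \ref{ix} by showing that the representation $\pi_{\sf T}$ coming from the universal property is an isomorphism onto its image, so the $\ell^1$-algebra from Corollary \ref{ix} is mapped isometrically onto the algebra in \eqref{fujimori}, and inverse closedness transfers.

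First, by Definition \ref{cstarix} the universal property gives a $^*$-morphism $\pi_{\sf T}:\mathfrak C(\Lambda)\to\mathbb B(\h)$ with $\pi_{\sf T}({\sf S}_\lambda)={\sf T}_\lambda$ for all $\lambda$. The main step is to invoke the gauge-invariant (Cuntz-Krieger) uniqueness theorem for higher rank graph algebras (Kumjian--Pask \cite{KP}): since the ${\sf k}$-graph is aperiodic and ${\sf T}_v\ne 0$ for every vertex $v\in\Lambda^0$, the morphism $\pi_{\sf T}$ is injective, hence isometric. Therefore its image $\mathfrak D:=\pi_{\sf T}\big[\mathfrak C(\Lambda)\big]\subset\mathbb B(\h)$ is a $C^*$-subalgebra which is $^*$-isomorphic to $\mathfrak C(\Lambda)$.

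Next, I would transport the dual action to $\mathfrak D$: setting $\beta^{\sf F}_\chi:=\pi_{\sf T}\circ\alpha^{\sf F}_\chi\circ\pi_{\sf T}^{-1}$ gives a strongly continuous action of $\wG$ on $\mathfrak D$ with $\beta^{\sf F}_\chi({\sf T}_\lambda)=\chi[{\sf F}(\lambda)]{\sf T}_\lambda$. Its spectral subspaces are exactly $\pi_{\sf T}\big[\mathfrak C(\Lambda)_g\big]=\overline{\rm span}\big\{{\sf T}_\lambda{\sf T}^*_\mu\,\big\vert\,{\sf F}(\lambda){\sf F}(\mu)^{-1}\!=g\big\}$, because $\pi_{\sf T}$ intertwines $\alpha^{\sf F}$ and $\beta^{\sf F}$ and is isometric, so it intertwines the corresponding spectral projections $\widetilde P_g$ and $\widetilde Q_g$. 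Consequently $\pi_{\sf T}$ restricts to an isometric $^*$-isomorphism of Banach $^*$-algebras from $\ell^1\big(\mathfrak C(\Lambda)\big)$ onto the Banach $^*$-algebra \eqref{fujimori}.

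Now Corollary \ref{ix} (whose hypotheses are met, as $\G$ is Abelian and therefore rigidly symmetric and amenable by Remark \ref{lacantors}) asserts that $\ell^1\big(\mathfrak C(\Lambda)\big)$ is inverse closed in $\mathfrak C(\Lambda)$. Transporting by the $C^*$-isomorphism $\pi_{\sf T}$, the algebra \eqref{fujimori} is inverse closed in $\mathfrak D$. Finally, since $\mathfrak D$ is a $C^*$-subalgebra of $\mathbb B(\h)$ and $C^*$-subalgebras are always inverse closed in larger $C^*$-algebras (see the argument of Theorem \ref{teoremix}(iii)), inverse closedness passes to $\mathbb B(\h)$.

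The only delicate point is the appeal to the uniqueness theorem for $\mathfrak C(\Lambda)$; this is where aperiodicity and the non-triviality condition ${\sf T}_v\ne 0$ enter in an essential way. Everything else is routine transport of structure along the isomorphism $\pi_{\sf T}$.
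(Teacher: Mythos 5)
Your proposal is correct and follows essentially the same route as the paper: injectivity of $\pi_{\sf T}$ via the Cuntz--Krieger uniqueness theorem (this is where aperiodicity and ${\sf T}_v\ne 0$ enter), transport of the dual action $\alpha^{\sf F}$ to an action $\beta^{\sf F}$ on the image so that the spectral subspaces and hence the $\ell^1$-algebras correspond, and then an appeal to the abstract symmetry/inverse-closedness result (the paper cites Theorem \ref{teoremix} directly where you route through Corollary \ref{ix}, which is the same thing). The only cosmetic slip is calling the relevant result the ``gauge-invariant'' uniqueness theorem; the hypothesis you actually use (aperiodicity plus nonvanishing vertex projections) is that of the Cuntz--Krieger uniqueness theorem, which is what the paper invokes.
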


\begin{proof}
Under the stated assumptions, "the Cuntz-Krieger unicity theorem" (see \cite{KP,Si} for example) states that the unique representation $\pi_{\sf T}:\mathfrak C(\Lambda)\to\mathbb B(\h)$ satisfying
$$
\pi_{\sf T}({\sf S_\lambda})={\sf T}_\lambda\,,\quad\forall\lambda\in\Lambda
$$
(provided by the universal property) is injective. We denote by $\mathfrak C({\sf T})$ its range, so that $\mathfrak C(\Lambda)$ and $\mathfrak C({\sf T})$ are isomorphic. Theorem \ref{teoremix} states then that $\pi_{\sf T}\Big[\ell^1\big(\mathfrak C(\Lambda)\big)\Big]$ is inverse closed in $\mathfrak C({\sf T})$ and in $\mathbb B(\h)$\,. There is an action $\beta^{\sf F}\!:\wG\to{\rm Aut}\big[\mathfrak C({\sf T})\big]$ such that 
$$
\beta^{\sf F}_\chi({\sf T}_\lambda)=\chi[{\sf F}(\lambda)]{\sf T}_\lambda\,,\quad\forall\,\chi\in\wG\,,\,\lambda\in\Lambda
$$
and $\pi_T$ intertwines the two actions. It follows that $\pi_{\sf T}\Big[\ell^1\big(\mathfrak C(\Lambda)\big)\Big]$ coincides with \eqref{fujimori}.
\end{proof}


\medskip
D. Jaur\'e and M. M\u antoiu:

\smallskip
Facultad de Ciencias, Departamento de Matem\'aticas, Universidad de Chile

Las Palmeras 3425, Casilla 653\,, Santiago, Chile.

E-mails: diegojaure@ug.uchile.cl\, and \,mantoiu@uchile.cl


\begin{thebibliography}{1}

\bibitem{Ab} F. Abadie: \emph{Envelopping Actions and Takai Duality for Partial Actions}, J. Funct. Anal. \textbf{197}, 14--67, (2003).

\bibitem{BB} I. Beltita and D. Beltita: {\it Inverse-Closed Algebras of Integral Operators on Locally Compact Groups}, Ann. H. Poincar\'e, \textbf{16}, 1283--1306, (2015).


\bibitem{BO} N. P. Brown and N. Ozawa: \emph{$C^*$-Algebras and Finite-Dimensional Approximations}, Graduate Studies in Mathematics, \textbf{88}, American Mathematical Society, Providence, Rhode Island, 2008.


\bibitem{Exs} R. Exel: \emph{Circle Actions on C*-Algebras, Partial Automorphisms, and a Generalized Pimsner-Voiculescu Exact Squence}, J. Funct. Anal. \textbf{122}, 361--401, (1994).

\bibitem{Exs1} R. Exel: \emph{The Bunce-Deddens Algebras as Crossed Products by Partial Automorphisms}, Bull. Braz. Math. Soc. (N.S.), \textbf{25}, 173--179, (1994).

\bibitem{Exs2} R. Exel: \emph{Approximately Finite $C^*$-algebras and Partial Automorphisms}, Math. Scand. \textbf{77}, 281--288, (1995).

\bibitem{Ex1} R. Exel: \emph{Twisted Partial Actions: a Classification of Regular $C^*$-Algebraic Bundles}, Proc. London Math. Soc. \textbf{74}(3), 417--443, (1997).

\bibitem{Ex3} R. Exel: \emph{Amenability for Fell Bundles}, J. Reine Angew. Math.\textbf{492}, 41--73, (1997).

\bibitem{Ex2} R. Exel: \emph{Partial Dynamical Systems, Fell Bundles and Applications}, Mathematical Surveys and Monographs, \textbf{224}, 2017.

\bibitem{ELQ} R. Exel, M. Laca and J. Quigg: \emph{Partial Dynamical Systems and $C^*$-Algebras Generated by Partial Isometries}, J. Oper. Th., \textbf{47}(1), 169--186, (2002).

\bibitem{FD2} J. M. G. Fell and R. S. Doran: \emph{Representations of $^*$-Algebras, Locally Compact Groups, and Banach $^*$-Algebraic Bundles}, Pure and Applied Mathematics, \textbf{126}, Academic Press, 1988.

\bibitem{FL} G. Fendler and M. Leinert: \emph{On Convolution Dominated Operators}, Integral Equations Operator Theory, \textbf{86}(2), 209--230, (2016).

\bibitem{FGL0} G. Fendler, K. Gr\"ochenig and Leinert: \emph{Symmetry of Weighted $L^1$-Algebras and the GRS-Condition}, Bull. London Math. Soc. \textbf{38}, 625--635, (2006).

\bibitem{FGL} G. Fendler, K. Gr\"ochenig and M. Leinert: \emph{ Convolution-Dominated Operators on Discrete Groups}, Integral Equations Operator Theory, \textbf{61}(4), 490--509, (2008).


\bibitem{Gr} K. Gr\"ochenig: \emph{Wiener's Lemma: Theme and Variations. An Introduction to Spectral Invariance}, in B. Forster and P. Massopust, editors, Four Short Courses on Harmonic Analysis, Appl. Num. Harm. Anal. Birkh\"auser, Boston, 2010.

\bibitem{GL1} K. Gr\"ochenig and M. Leinert: \emph{Wiener's Lemma for Twisted Convolution and Gabor Frames}, J. Amer. Math. Soc. \textbf{17}, 1--18, (2004).

\bibitem{GL2} K. Gr\"ochenig and M. Leinert: \emph{Symmetry and Inverse-Closedness of Matrix Algebras and Functional Calculus for Infinite Matrices}, Trans. of the A.M.S. \textbf{358}(6), 2695--2711, (2006).

\bibitem{KP} A. Kumjian and D. Pask: \emph{Higher Rank Graph $C^*$-algebras}, New York J. Math. \textbf{6}, 1--20, (2000).



\bibitem{Ku} W. Kugler: \emph{On the Symmetry of Generalized $L^1$-Algebras}, Math. Z. \textbf{168}(3), 241--262, (1979).

\bibitem{LP} H. Leptin and D. Poguntke: \emph{Symmetry and Nonsymmetry for Locally Compact Groups}, J. Funct. Anal. \textbf{33}(2), 119--134, (1979).

\bibitem{Ma} M. Mantoiu: \emph{Symmetry and Inverse Closedness for Banach $C^*$-Algebras Associated to Discrete Groups}, Banach J. Math. Anal. \textbf{9}(2), 289--310, (2015).

\bibitem{MC} K. McClanahan: \emph{$K$-Theory for Partial Crossed Products by Discrete Groups}, J. Funct. Anal. \textbf{130}, 77--117, (1995).

\bibitem{Ni} A. Nica: \emph{$C^*$-Algebras Generated by Isometries and Wiener-Hopf Operators}, J. Oper. Th. \textbf{27}(1), 17--52, (1992).


\bibitem{Pa2} T.W. Palmer: \emph{Banach Algebras and the General Theory of $^*$-Algebras}, Vol. II. $^*$-Algebras, Encyclopedia of Mathematics and its Applications, \textbf{69}. Cambridge University Press, Cambridge, 2001.

\bibitem{Po} D. Poguntke: \emph{Rigidly Symmetric $L^1$-Group Algebras}, Seminar Sophus Lie, \textbf{2}, 189--197, (1992).                

\bibitem{Qu} J. C. Quigg: \emph{Discrete $C^*$-Coactions and $C^*$-Algebraic Bundles}, J. Austral. Math. Soc. (Series A) \textbf{60}, 204-221, (1996).

\bibitem{QR} J. C. Quigg and I. Raeburn: \emph{Characterizations of Crossed Products by Partial Actions}, J. Oper. Th. \textbf{37}, 311--340, (1997).

\bibitem{Ra} I. Raeburn: \emph{Deformations of Fell Bundles and Twisted Graph Algebras}, Math. Proc. Cambridge Philos. Soc. \textbf{161}, 535--558, (2016).

\bibitem{Ra1} I. Raeburn: \emph{On Graded $C^*$-Algebras}, Bull. Aust. Math. Soc. \textbf{97}, 127--132, (2018). 

\bibitem{SW} E. Samei and M. Wiersma: \emph{Quasi-Hermitian Locally Compact Groups are Amenable}, Adv. Math. \textbf{359}, 106897, (2020).

\bibitem{Si} A. Sims: \emph{Lecture Notes on Higher-Rank Graphs and Their $C^*$-Algebras}.

\bibitem{Ta} M. Takesaki: \emph{A Liminal Crossed Product of a Uniformly Hyperfinite $C^*$-Algebra by a Compact Abelian Automorphism Group}, J. Funct. Anal. \textbf{7}, 140--146, (1971).

\bibitem{Wie} N. Wiener: \emph{Tauberian Theorems}, Ann. of Math.  \textbf{33}(1), 1--100, (1932).

\end{thebibliography}
\end{document}